\newcommand{\Mcal}{\ensuremath{\mathcal{M}}}
\newcommand{\directlim}{\ensuremath{\underrightarrow{\mathrm{lim}} \,}}
\newcommand{\invlim}{\ensuremath{\underleftarrow{\mathrm{lim}} \,}}
\newcommand{\spc}[1]{\ensuremath{\mathbb{#1}}}      
\newcommand{\Xsp}{\spc{X}}                          
\newcommand{\Ysp}{\spc{Y}}                          
\newcommand{\Lsp}{\spc{L}}
\newcommand{\Rbb}{\mathbb{R}}
\newcommand{\Nbb}{\mathbb{N}}
\newcommand{\Sbb}{\mathbb{S}}
\newcommand{\Xcal}{\ensuremath{\mathcal{X}}} 
\newcommand{\Ycal}{\ensuremath{\mathcal{Y}}}
\newcommand{\Tcal}{\ensuremath{\mathcal{T}}}
\newcommand{\supp}{\mathrm{supp}}
\newcommand*{\swap}[2]{\hspace{-0.5ex}#2#1}
\newtheorem{theorem}{Theorem}[section]
\newtheorem{definition}{Definition}[section]
\newtheorem{lemma}{Lemma}[section]
\newtheorem{proposition}{Proposition}[section]
\newtheorem{corollary}{Corollary}[section]
\title{Projective limits of probabilistic symmetries\\and their applications to random graph limits}
\author[1]{Pim van der Hoorn}
\author[2,3,4]{Huck Stepanyants}
\author[3,4,5,6]{Dmitri Krioukov}
\affil[1]{Department of Mathematics and Computer Science, Eindhoven University of Technology}
\affil[2]{Department of Physics, Harvard University}
\affil[3]{Department of Physics, Northeastern University}
\affil[4]{Network Science Institute, Northeastern University}
\affil[5]{Department of Mathematics, Northeastern University}
\affil[6]{Department of Electrical and Computer Engineering, Northeastern University}
\date{}
\begin{document}

\maketitle

\begin{abstract}
  We couple projective limits of probability measures to direct limits of their symmetry groups. We show that the direct limit group is the group of symmetries of the projective limit probability measure. If projective systems of probability measures represent point processes in increasingly larger finite regions of the same infinite space, then we show that under some additional niceness and consistency assumptions, an extension of the direct limit group is the symmetry group of the projective limit point process in the whole infinite space. The application of these results to random graph limits provides ``shortest paths'' to graphons and graphexes as it recovers these random graph limits as trivial corollaries. Another application example encompasses a broad class of limits of random graphs with bounded average degrees. This class includes a representative collection of paradigmatic random graph models that have attracted significant research attention in diverse areas of science. Our approach thus provides a general unified framework to study limits of very different types of random graphs.
\end{abstract}

\newpage

\tableofcontents

\newpage

\section{Introduction}

Projective limits~\cite{mac2013categories,bourbaki2004theory,bourbaki1995topological} are a very general and powerful concept applicable to a wide variety of categories---sets, topological spaces, measurable spaces, or probability measures. Projective limits of probability measures can be seen as a generalization of the Kolmogorov extension theorem~\cite{kolmogoroff1933grundbegriffe}, which establishes the conditions for the existence of a stochastic process as a limit of a family of finite-dimensional distributions. A classic example is the Dirichlet process, extensively studied and used in statistics and machine learning~\cite{ferguson1973bayesian, orbanz2011projective, broderick2012beta, lin2010construction}. The projective limit in this case is the limit of Dirichlet distributions~\cite{orbanz2011projective}, used in the latent Dirichlet allocation~\cite{blei2003latent}, for instance.

Direct limits~\cite{mac2013categories,bourbaki2004theory,bourbaki1995topological} are as general and powerful, as they are dual to projective limits~\cite{mac2013categories,weibel1994introduction}. In fact, projective limits are also known as \emph{inverse} limits, to emphasize the duality. This duality boils down to inverting the direction of all the arrows in the commutative diagrams in the definitions of the limits. A classic example of this duality is the Pontryagin duality between direct limits of locally compact abelian groups and the inverse limits of their Pontryagin-dual groups, which are the groups of continuous homomorphisms from the original group to the circle group~\cite{morris1977pontryagin}. Yet the objects to which projective and direct limits apply do not have to belong to the same category. 

Here, we couple projective limits of probability measures to direct limits of groups with respect to which these measures are invariant. Our first result in Section~\ref{ssec:main_results} states that under certain natural conditions, the projective limit of probability measures is invariant with respect to the direct limit of groups of symmetries of these measures. That is to say that probabilistic symmetries are preserved in the limit, if things are right. This result is very general, and the closest previous results to this one, at least in spirit, appear to be~\cite{ackerman2016invariant, kleijn2025existence, olshanski1996ergodic, borodin2005harmonic, orbanz2011projective}.

However, in general, projective and direct limits deal with product spaces that are not always terribly useful in applications. Things change drastically if spaces that these limits deal with embed into each other, i.e., if a smaller space is a subspace of a bigger space. And this is exactly the settings we narrow down our next results to, motivated by applications. 

We consider projective systems of probability measures that represent point processes in increasingly larger finite chunks of the same infinite space. Our second result in Section~\ref{ssec:main_results} states that the projective limit of these measures in this case is a point process in the whole infinite space.

Finally, in our last result in Section~\ref{ssec:main_results}, we couple the projective systems of these point processes with the direct system of their symmetry groups, and show that the limit process in the whole infinite space is invariant with respect to not only the direct limit group, but also suitable larger groups.

These results, especially the last one, are strongly motivated by applications that we present in Section~\ref{sec:applications}. These applications deal with projective limits of random graphs~\cite{spencer2023projective}. We represent random graphs as point processes in which points are graph edges. If an edge connects vertices labeled by~$x$ and~$y$, where $x$ and~$y$ are elements of some label space~$\Lsp$, then this edge is a random point $\{x,y\}\in\Lsp\times\Lsp$, so the random graph is a point process in $\Lsp\times\Lsp$. A finite graph of size~$n$ has a finite label space~$\Lsp_n$, a bigger graph of size $m>n$ has a bigger~$\Lsp_m$, while an infinite graph needs an infinite~$\Lsp_\infty$. See Figure~\ref{fig} for an illustration.

It is now just a matter of choice what these label spaces and their symmetry groups are. Whatever they are, our last theorem in Section~\ref{ssec:main_results} tells us what the symmetry group in the limit is. We show in Section~\ref{sec:applications} that if one makes the most conventional choice of integer labels $\Lsp_n=[n]=\{1,2,\ldots,n\}$, Fig.~\ref{fig}(a), and takes the symmetry group to be the symmetric group of permutations of labels~$[n]$, then in the limit we get random graphs with labels in $\Lsp_\infty=\Nbb$ invariant with respect to the infinite symmetric group of permutations of~$\Nbb$. These graphs have a nice representation in terms of graphons~\cite{lovasz2012large,diaconis2007graph,janson2013graphons}. If one chooses instead the labels to be reals $\Lsp_n=[0,n]$, Fig.~\ref{fig}(b), and takes the symmetry group to the group of measure-preserving transformations of $[0,n]$, then in the limit we get random graphs with labels in $\Lsp_\infty=\Rbb_+$ invariant with respect to measure-preserving transformations of~$\Rbb_+$. These graphs have a nice representation in terms of graphexes~\cite{veitch2015class,borgs2019sampling,janson2022convergence}. In a way, these observations, based on applications of our coupling of projective and direct limits, provide ``shortest paths'' to graphons and graphexes.

Unfortunately, graphons are limits of dense graphs, graphexes are limits of sparse but not ultrasparse graphs, while most real-world networks are ultrasparse~\cite{boccaletti2006complex,barabasi2016network}. Here, by \emph{ultrasparse} we mean that the (expected) average degree is bounded in the limit, while by \emph{dense} we mean that it grows linearly with~$n$. For a variety of reasons, it has been a major struggle to come up with useful general limits for ultrasparse graphs. The most relevant and powerful notion of graph limits applicable to ultrasparse graphs and explored well in the past, is perhaps the Benjamini-Schramm limit, also known as local convergence~\cite{benjamini2011recurrence,aldous2004objective,hofstad2023local}. It characterizes well the local structure of the graph in the limit, yet it does not provide any prescription on how to sample a graph from its limit.

Our last example in Section~\ref{sec:applications} presents a simple combination of label spaces and their symmetry groups that can easily produce limits of ultrasparse graphs. This combination is actually the simplest out of the three examples presented in Section~\ref{sec:applications}. The label spaces~$\Lsp_n$ are concentric balls of volume~$n$ in~$\Rbb^d$, Fig.~\ref{fig}(c), while the symmetry group is the group of rotations in~$\Rbb^d$ around the center of the balls, so in the limit we get rotationally invariant graphs in the whole~$\Rbb^d$. This example includes a collection of impactful ultrasparse random graph models studied extensively in diverse domains---(soft) random geometric graphs~\cite{penrose2003random,penrose2016connectivity}, inhomogeneous random graphs~\cite{bollobas2007phase,chung2002average,hoorn2017sparse} and their geometric versions~\cite{bringmann2019geometric,krioukov2010hyperbolic,budel2024random}, as well as causal sets in quantum gravity~\cite{bombelli1987space,krioukov2012network,surya2025causal}. The key difference between this example and the other two above is that there is currently no---nice or ugly---representation result for these rotationally invariant random graphs in $\Rbb^d$. Obtaining such a result would put these ultrasparse graph limits on par with graphons and graphexes.

Forgoing this representational challenge, our approach that couples projective and direct limits, provides a flexible unified framework for studying limits of random graphs, be they dense, sparse, or ultrasparse. 

\begin{figure}
    \centerline{\includegraphics[width=\linewidth]{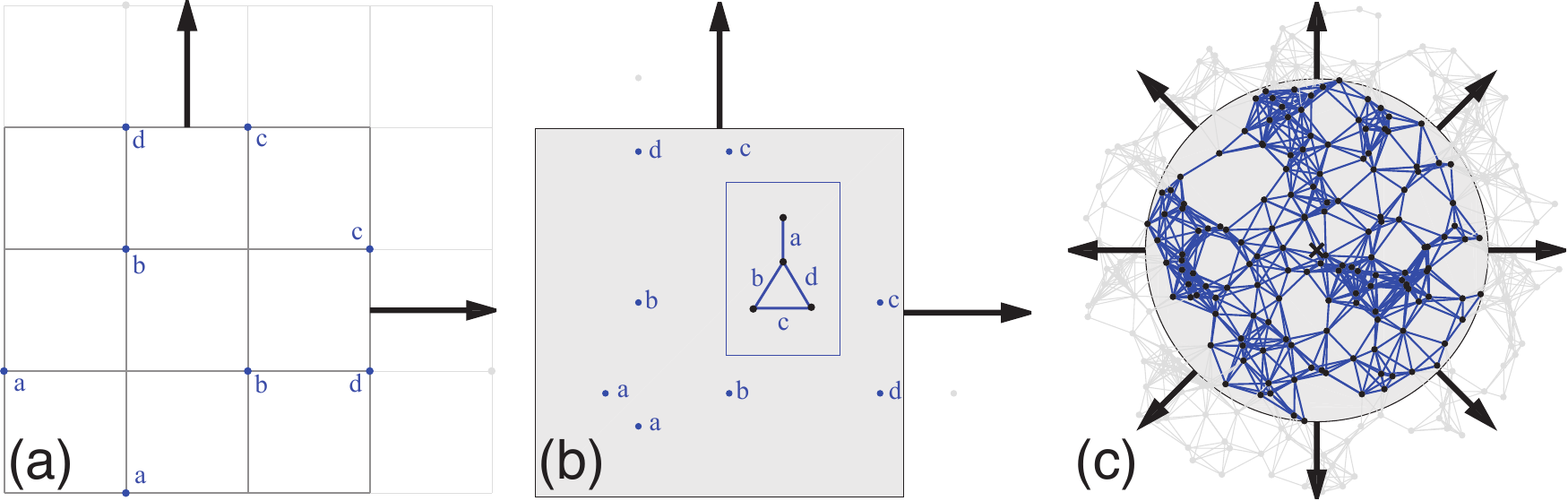}}
    \caption{ \textbf{Projective limits of random graphs as point processes.} \textbf{(a)}~A random graph of size~$4$ with vertices labeled by $\{1,2,3,4\}$. Each point represents an edge. The graph is undirected, so the point process is symmetric, with each edge represented by two symmetric points. The shown graph consists of four edges $a=\{1,2\}$, $b=\{2,3\}$, $c=\{3,4\}$, and $d=\{4,2\}$. Since the label space is~$\Nbb$ in the limit, the point process is confined to locations on the $\Nbb^2$ lattice. \textbf{(b)}~The same graph, shown in the inset, but with vertices labeled by real numbers in~$[0,4]$. Points can now be anywhere on the $[0,4]^2$ square, or on $\Rbb_+^2$ in the limit. \textbf{(c)}~A random geometric graph on a disk. The visualization is different compared to~(a) and (b) in that only a single copy of the label space is shown, the label space is two-dimensional, a disk in~$\Rbb^2$, points represent graph vertices versus edges, while edges are shown explicitly. The arrows in all the panels indicate the projective expansion of the label space as the graph grows.
    }
    \label{fig}
    \end{figure}

\section{Background information and definitions}\label{sec:background_results}

\subsection{Point processes}\label{ssec:point_process}

We begin with a brief recollection of the definition of point processes suitable for our needs~\cite{last2017lectures}. 

Let $\Xsp$ be a Polish space, with topology $T$ and denote by $\Xcal := \sigma(T)$ the associated Borel $\sigma$-algebra, and let $M(\Xsp)$ be the set of locally finite simple counting measures on $\Xsp$.  

We now endow $M(\Xsp)$ with its canonical measurable structure. For every $X \in \Xcal$ and $\xi \in M(\Xsp)$ we define the evaluation map $\epsilon_X : M(\Xsp) \to \mathbb N \cup \{\infty\}$ by
$\epsilon_X(\xi) := \xi(X)$, and let $\Mcal$ be the $\sigma$-algebra generated by these maps, $\Mcal := \sigma\bigl(\epsilon_X : X \in \Xcal\bigr)$. This is the standard $\sigma$-algebra on the space of locally finite simple counting measures used in the theory of point processes~\cite{last2017lectures}. We recall that $\Mcal$ is equal to the Borel $\sigma$-algebra generated by the \emph{vague topology} $\Tcal$ on $M(\Xsp)$, i.e. the topology generated by the integration maps $\pi_f: \xi \mapsto \int f \mathrm{d}\xi$ for $f \in C_c(\Xsp)$ which is the set of continuous functions $f$ on $\Xsp$ with bounded support~\cite[Theorem 4.2]{kallenberg2017random}. 

Now that we have a measurable space $(M(\Xsp), \Mcal)$, we define a \emph{point process} as a random element of $M(\Xsp)$. We identify such a point process with its corresponding probability measure $\mu$ on $(M(\Xsp), \Mcal)$, and say that $\mu$ is \emph{a point process on $\Xsp$}.

\subsection{Projective limits}\label{ssec:projective_limits}

Here we will briefly discuss the notions of projective limits of topological spaces and of measures on the corresponding Borel spaces. We consider topological spaces, and not general measurable spaces, as our space of interest $(M(\Xsp), \Mcal)$ is a Borel space with respect to the vague topology and this is an essential ingredient for studying limits of probability measures.

\subsubsection{Projective limits of topological spaces}\label{ssec:projective_limits_topospaces}

The main object in the framework of projective limits consists of a sequence of spaces together with \emph{projections} which is called a projective system. The notions of projective systems and projective limits are very general and can be defined for any category and can be indexed by any directed set~\cite{mac2013categories}. What we present below are more specific definitions for topological spaces indexed by positive integers that we will rely upon later on. We refer to~\cite[Chapter 4]{bourbaki1995topological} for more details.

\begin{definition}[Projective system of topological spaces~\cite{bourbaki1995topological}]\label{def:projective_system_top}
A \emph{projective system} of topological spaces consists of a collection $(\Xsp_n, T_n)_{n \ge 1}$ of topological spaces and a collection of continuous functions $\pi_{mn} : \Xsp_m \to \Xsp_n$ for all $n \le m$, such that $\pi_{nn} = \mathrm{id}_{\Xsp_n}$ and
\[
\begin{tikzcd}
	\Xsp_n  &\Xsp_m \arrow[l,swap,"\pi_{mn}"]\\
	&\Xsp_k \arrow[ul,"\pi_{kn}"] \arrow[u,swap,"\pi_{km}"]
\end{tikzcd}
\]
commutes for all $n \le m \le k$.
\end{definition}

We denote the projective system by $\langle \Xsp_n, T_n, \pi_{mn} \rangle_\Nbb$. The maps $\pi_{mn}$ are called \emph{projections}.

\begin{definition}[Projective limit of topological spaces~\cite{bourbaki1995topological}]\label{def:projective_limit_top}
Let $\langle \Xsp_n, T_n, \pi_{mn} \rangle_\Nbb$ be a projective system of topological spaces. A \emph{projective limit} of this system consist of a topological space $(\Xsp_\Nbb, T_\Nbb)$ and a collection of continuous functions $\pi_{n} : \Xsp_\Nbb \to \Xsp_n$ for $n \ge 1$, such that the following holds:
\begin{enumerate}
\item For every $n \le m$ the following diagram commutes
\begin{equation}\label{eq:def_projective_limit_projections}
\begin{tikzcd}
	\Xsp_n  &\Xsp_m \arrow[l,swap,"\pi_{mn}"]\\
	&\Xsp_\Nbb \arrow[ul,"\pi_{n}"] \arrow[u,swap,"\pi_{m}"]
\end{tikzcd}
\end{equation}
\item For every other topological space $(\Xsp, T)$ and continuous functions $\phi_n$ for which 
\[
\begin{tikzcd}
	\Xsp_n  &\Xsp_m \arrow[l,swap,"\pi_{mn}"]\\
	&\Xsp \arrow[ul,"\phi_n"] \arrow[u,swap,"\phi_m"]
\end{tikzcd}
\]
commutes for all $n \le m$, there exists a \emph{unique} continuous function $\phi : \Xsp \to \Xsp_\Nbb$ such that
\begin{equation}\label{eq:def_projective_limit_universal_diagram}
\begin{tikzcd}
	\Xsp_n &\Xsp_\Nbb \arrow[l,swap,"\pi_n"]\\
	&\Xsp \arrow[ul,"\phi_n"] \arrow[u,swap,"\phi"]
\end{tikzcd}
\end{equation}
commutes for all $n \ge 1$.
\end{enumerate} 
\end{definition}

The first property states that each smaller space $\Xsp_n$ can be obtained as a ``projection'' of the limit space $\Xsp_\Nbb$, in a manner that respects the local projections $\pi_{mn}$. The second property is referred to as the \emph{universal property}. 

We usually write $\langle \Xsp_\Nbb, T_\Nbb, \pi_n \rangle = \invlim \langle \Xsp_n, T_n, \pi_{mn} \rangle_\Nbb$ to denote that $\langle \Xsp_\Nbb, T_\Nbb, \pi_n \rangle$ is the projective limit of the system $\langle \Xsp_n, T_n, \pi_{mn} \rangle_\Nbb$.

An important standard result that we will use is that any projective system of topological spaces has a projective limit.
\begin{theorem}[Projective limit of topological spaces~\cite{bourbaki1995topological}]\label{thm:projective_limit_topological_spaces}
Let $\langle \Xsp_n, T_n, \pi_{mn}\rangle_\Nbb$ be a projective system of topological spaces. Let 
\[
	\Xsp_\Nbb := \left\{(y_n)_{n \ge 1} \in \prod_{n \ge 1} \Xsp_n \, : \, y_n = \pi_{mn}(y_m) \text{ for all } n \le m\right\}
\] 
and take $\pi_n : \Xsp_\Nbb \to \Xsp_n$ to be the canonical projections. Moreover, let $T_\Nbb$ to be the smallest topology that makes the canonical projections continuous, i.e., $T_\Nbb = T\left(\bigcup_{n \ge 1} \pi_n^{-1}(T_n)\right)$. Then
\[
	\langle \Xsp_\Nbb, T_\Nbb, \pi_n \rangle = \invlim \langle \Xsp_n, T_n, \pi_{mn} \rangle_\Nbb.
\]
\end{theorem}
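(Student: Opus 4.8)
The plan is to verify directly that the explicitly constructed triple $\langle \Xsp_\Nbb, T_\Nbb, \pi_n \rangle$ satisfies the two defining properties in Definition~\ref{def:projective_limit_top}, using only standard facts about initial topologies (in particular product and subspace topologies). First I would record the basic observation that $\Xsp_\Nbb$ is, by construction, a subset of $\prod_{n \ge 1}\Xsp_n$, and that the topology $T_\Nbb$ generated by the subbasis $\bigcup_{n\ge1}\pi_n^{-1}(T_n)$ coincides with the subspace topology inherited from the product topology on $\prod_{n\ge1}\Xsp_n$ (since the generating subbasis of the product topology is $\bigcup_n p_n^{-1}(T_n)$ and $\pi_n = p_n|_{\Xsp_\Nbb}$). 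In particular each canonical projection $\pi_n \colon \Xsp_\Nbb \to \Xsp_n$ is continuous, which is exactly what is needed for $\langle \Xsp_\Nbb, T_\Nbb, \pi_n \rangle$ to be an admissible candidate for a projective limit.

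For Property~1, fix $n \le m$ and let $(y_k)_{k\ge1} \in \Xsp_\Nbb$. By the defining constraint of $\Xsp_\Nbb$ we have $y_n = \pi_{mn}(y_m)$, i.e. $\pi_n\bigl((y_k)_k\bigr) = \pi_{mn}\bigl(\pi_m((y_k)_k)\bigr)$. Hence $\pi_{mn}\circ\pi_m = \pi_n$, so the triangle in~\eqref{eq:def_projective_limit_projections} commutes, all maps in it being continuous by the previous paragraph.

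For Property~2, let $(\Xsp, T)$ be any topological space equipped with continuous maps $\phi_n \colon \Xsp \to \Xsp_n$ such that $\pi_{mn}\circ\phi_m = \phi_n$ for all $n \le m$. Define $\phi\colon \Xsp \to \prod_{n\ge1}\Xsp_n$ by $\phi(x) = (\phi_n(x))_{n\ge1}$. The compatibility relation $\pi_{mn}\circ\phi_m = \phi_n$ says precisely that $\phi(x)$ satisfies the defining constraint of $\Xsp_\Nbb$, so $\phi$ actually takes values in $\Xsp_\Nbb$. Since the coordinate functions of $\phi$ are the $\phi_n$, which are continuous, $\phi$ is continuous as a map into the product, hence also as a map into the subspace $(\Xsp_\Nbb, T_\Nbb)$. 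By construction $\pi_n\circ\phi = \phi_n$, so the diagram~\eqref{eq:def_projective_limit_universal_diagram} commutes. For uniqueness, if $\psi\colon \Xsp \to \Xsp_\Nbb$ is continuous with $\pi_n\circ\psi = \phi_n$ for all $n$, then for every $x$ the $n$-th coordinate of $\psi(x)$ equals $\phi_n(x)$ for all $n$, whence $\psi(x) = (\phi_n(x))_n = \phi(x)$; thus $\psi = \phi$.

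The only step that is more than bookkeeping is the identification of $T_\Nbb$ with the subspace topology of the product and the attendant universal property of initial topologies (namely that a map into a space carrying an initial topology is continuous iff its compositions with the generating maps are continuous); everything else is a direct unwinding of definitions. Notably, no nonemptiness, Hausdorffness, or countability hypotheses on the $\Xsp_n$ are required, and the argument is the standard one valid in any category with products and equalizers specialized to $\mathbf{Top}$.
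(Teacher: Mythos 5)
Your proof is correct and complete. The paper does not supply its own proof of this statement---it is quoted verbatim as a standard result from Bourbaki---so there is nothing internal to compare against; your argument (identify $T_\Nbb$ with the subspace topology inherited from the product topology on $\prod_{n\ge1}\Xsp_n$, read off the commutativity of~\eqref{eq:def_projective_limit_projections} from the defining constraint of $\Xsp_\Nbb$, and build the universal map coordinatewise, with continuity from the universal property of initial topologies and uniqueness forced by the coordinates) is precisely the standard construction the cited reference gives.
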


\subsubsection{Projective limits of probability measures}
The notion of projective limits of topological spaces $(\Xsp_n, T_n)$ extends naturally to projective limits of probability measures. Here we assume that these spaces are Polish spaces equipped with their Borel sigma-algebras $\sigma(T_n)$ and probability measures on them. 

For this extension, one first needs to define what a projective system of probability measures is. For any measure $\mu$ on $(\Xsp, \Xcal)$ and measurable map $f: (\Xsp, \Xcal) \to (\Ysp, \Ycal)$, let $f \ast \mu$ be the push-forward of $\mu$ under $f$:
\[
	f \ast \mu(B) = \mu(f^{-1}(B)) \quad \text{for all } B \in \Ycal.
\]
\begin{definition}[Projective system of probability measures~\cite{bochner2005harmonic}]\label{def:projective_system_measures_pushforward}
Let $\langle\Xsp_n, T_n, \pi_{mn}\rangle_\Nbb$ be a projective system of Polish spaces. A collection $(\mu_n)_{n \ge 1}$ of probability measures on $\Xsp_n$ is called a \emph{projective system of measures} if $\mu_n = \pi_{mn} \ast \mu_m$ for all $n < m$, i.e., if the following diagram commutes for all $n < m$:
\[
\begin{tikzcd}
	\sigma(T_n) \arrow[r,"\pi_{mn}^{-1}"] \arrow[dr,swap,"\mu_n"] &\sigma(T_m) \arrow[d,"\mu_m"]  \\
	& \Rbb_+
\end{tikzcd}
\]
\end{definition}

The definition of a projective limit of probability measures is then as follows.
\begin{definition}[Projective limit of probability measures~\cite{bochner2005harmonic}]
Let $\langle \Xsp_n, T_n, \pi_{mn} \rangle_\Nbb$ be a projective system of Polish spaces and $(\mu_n)_{n \ge 1}$ be a projective sequence of probability measures with respect to $\pi_{mn}$. Then the \emph{projective limit of $(\mu_n)_{n \ge 1}$} (if it exists) is the unique probability measure $\mu_\Nbb$ on $\Xsp_\Nbb$ such that $\mu_n = \pi_n \ast \mu_\Nbb$ holds for all $n \ge 1$, where $\pi_n$ are the projections $\Xsp_\Nbb \to \Xsp_n$:
\[
\begin{tikzcd}
	\sigma(T_n) \arrow[r,"\pi_{n}^{-1}"] \arrow[dr,swap,"\mu_n"] &\sigma(T_\Nbb) \arrow[d,"\mu_\Nbb"]  \\
	& \Rbb_+
\end{tikzcd}
\]
\end{definition}

Similarly to projective systems of topological space, we will denote the projective limit of a sequence of probability measures $(\mu_n)_{n \ge 1}$ by $\mu_\Nbb = \invlim \mu_n$.

The following result, due to Bochner~\cite{bochner2005harmonic}, shows that any projective system of probability measures has a projective limit, which is a measure on the resulting projective limit space.

\begin{theorem}[Projective limit of probability measures~\cite{bochner2005harmonic}]\label{thm:projective_limit_prob_measures}
Let $\langle \Xsp_n, T_n, \pi_{mn} \rangle_\Nbb$ be a projective system of Polish spaces and let $(\mu_n)_{n \ge 1}$ be a projective sequence of probability measures on $(\Xsp_n, \sigma(T_n))$ with respect to $\pi_{mn}$. Then there exist a \emph{unique} probability measure $\mu_\Nbb$ on $(\Xsp_\Nbb, \sigma(T_\Nbb))$ such that $\mu_n = \pi_n \ast \mu_\Nbb$ for all $n \ge 1$, where $\pi_n$ are the canonical projections of the projective limit space. 
\end{theorem}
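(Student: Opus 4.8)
The plan is to reduce the statement to the Kolmogorov extension theorem, using the explicit product model $\Xsp_\Nbb \subseteq P := \prod_{n\ge 1}\Xsp_n$ provided by Theorem~\ref{thm:projective_limit_topological_spaces}, under the (harmless, in all our applications) assumption that the $\Xsp_n$ are Polish --- indeed in the applications $\Xsp_n$ is a space of locally finite counting measures on a Polish space, hence itself Polish. Equip $P$ with the product $\sigma$-algebra $\bigotimes_{n\ge 1}\sigma(T_n)$; then $\pi_n$ is the restriction to $\Xsp_\Nbb$ of the $n$-th coordinate projection $\mathrm{pr}_n:P\to\Xsp_n$.

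First I would assemble a consistent family of finite-dimensional distributions. For each $n$ let $\iota_n:\Xsp_n\to\prod_{k=1}^n\Xsp_k$ be the continuous (hence measurable) map $\iota_n(y)=(\pi_{n,1}(y),\dots,\pi_{n,n-1}(y),y)$, and set $\nu_n:=\iota_n\ast\mu_n$. Dropping the last coordinate of $\iota_{n+1}$ and using the cocycle identity $\pi_{n+1,k}=\pi_{n,k}\circ\pi_{n+1,n}$ for $1\le k\le n$, together with the projectivity $\mu_n=\pi_{n+1,n}\ast\mu_{n+1}$, shows that the marginal of $\nu_{n+1}$ on the first $n$ coordinates is exactly $\nu_n$. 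The Kolmogorov extension theorem for Polish spaces then yields a unique probability measure $\nu$ on $(P,\bigotimes_n\sigma(T_n))$ whose marginal on the first $n$ coordinates is $\nu_n$ for every $n$.

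Next I would show $\nu$ is concentrated on $\Xsp_\Nbb$. By the cocycle identity, $\Xsp_\Nbb=\bigcap_{n\ge 1}\{y\in P:\,y_n=\pi_{n+1,n}(y_{n+1})\}$, and each of these sets is measurable (it is the preimage of the graph of the continuous map $\pi_{n+1,n}$, which is closed since $\Xsp_n$ is Polish). By construction the marginal of $\nu$ on coordinates $(n,n+1)$ is the law of $(\pi_{n+1,n}(Y),Y)$ with $Y\sim\mu_{n+1}$, which is carried by $\{y_n=\pi_{n+1,n}(y_{n+1})\}$; intersecting over $n$ gives $\nu(\Xsp_\Nbb)=1$. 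Let $\mu_\Nbb$ be the trace of $\nu$ on $\Xsp_\Nbb$. Since $\pi_n=\mathrm{pr}_n|_{\Xsp_\Nbb}$, the $\sigma$-algebra $\sigma(T_\Nbb)$ is precisely the trace of $\bigotimes_n\sigma(T_n)$ on $\Xsp_\Nbb$, so $\mu_\Nbb$ is a well-defined probability measure on $(\Xsp_\Nbb,\sigma(T_\Nbb))$; and for $A\in\sigma(T_n)$, $\mu_\Nbb(\pi_n^{-1}(A))=\nu(\mathrm{pr}_n^{-1}(A))=\mu_n(A)$, i.e. $\pi_n\ast\mu_\Nbb=\mu_n$.

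Finally, for uniqueness, the collection $\mathcal{P}:=\bigcup_{n\ge 1}\pi_n^{-1}(\sigma(T_n))$ is a $\pi$-system --- for $n\le m$, $\pi_n^{-1}(A)\cap\pi_m^{-1}(B)=\pi_m^{-1}(\pi_{mn}^{-1}(A)\cap B)$ --- and it generates $\sigma(T_\Nbb)$, so any probability measure with the prescribed projections agrees with $\mu_\Nbb$ on $\mathcal{P}$ and hence, by Dynkin's $\pi$-$\lambda$ theorem, everywhere. I expect the only genuinely delicate point to be the invocation of the Kolmogorov extension theorem: without a regularity assumption on the $\Xsp_n$ (equivalently, Bochner's restriction to ``sequentially maximal'' systems) a projective limit of probability measures can fail to exist, so this is exactly where the niceness of the spaces must be spent; the remaining steps are routine manipulations of pushforwards and traces of $\sigma$-algebras.
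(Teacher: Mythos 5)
The paper does not prove this theorem at all: it is quoted as a known result of Bochner~\cite{bochner2005harmonic} and used as a black box (in particular in the proof of Theorem~\ref{thm:projective_limit_random_graphs}), so there is no in-paper argument to compare against. Your reduction to the Kolmogorov extension theorem is a correct and self-contained substitute, and all the individual steps check out: the consistency of the laws $\nu_n=\iota_n\ast\mu_n$ follows exactly as you say from the cocycle identity $\pi_{n+1,k}=\pi_{n,k}\circ\pi_{n+1,n}$ and projectivity; the set $\Xsp_\Nbb$ is indeed the countable intersection of the consecutive-compatibility events, each of full $\nu$-measure and each measurable because the graph of a continuous map into a Hausdorff space is closed; the pushforward identity and the $\pi$-system/Dynkin uniqueness argument are standard. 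One small point you implicitly use: the identification of $\sigma(T_\Nbb)$ with the trace of the product $\sigma$-algebra (and the fact that your $\pi$-system generates all of $\sigma(T_\Nbb)$, not just a sub-$\sigma$-algebra) needs second countability of the $\Xsp_n$, which your Polish hypothesis supplies; it would be worth saying so explicitly.

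The substantive difference between your statement and the paper's is the extra regularity hypothesis, and here you are right and the paper is loose: for arbitrary projective systems of topological spaces the theorem as printed is false (consistent families of Borel measures need not admit a projective limit without sequential maximality and inner-regularity assumptions, which is why Bochner's original theorem carries them). Your Polish assumption is the cleanest sufficient condition and it does cover the paper's only use of the result, where the spaces are $M(\Xsp_n)$ for $\Xsp_n$ Polish; the only caveat is that the paper's topology on $M(\Xsp_n)$ (generated by \emph{all} evaluation maps $\epsilon_X$, $X\in\Xcal$) is finer than the usual vague topology, so to invoke Kolmogorov one should work with the standard Borel structure $\Mcal_n$, which coincides with the $\sigma$-algebra generated by the evaluation maps and is that of a Polish topology. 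So: correct proof, genuinely supplied where the paper supplies none, and it buys an explicit identification of the hypotheses under which the cited theorem actually holds.
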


\subsection{Direct limits}

Our results will couple projective limits of probability measures to direct limits of groups acting on them. Direct limits are in a certain sense \emph{inverse} to projective limits, which are also known as \emph{inverse limits}. We refer to~\cite[Chapter 7]{bourbaki2004theory} or~\cite[Chapter 3]{mac2013categories} for a general treatment, while here we use direct limits in application to groups.

\begin{definition}[Direct system of groups~\cite{bourbaki2004algebraI}]\label{def:direct_system_group}
A \emph{direct system} of groups consists of a collection $(\Phi_n)_{n \ge 1}$ of groups and a collection of group homomorphisms $\eta_{nm} : \Phi_n \to \Phi_m$ for all $n \le m$, such that $\eta_{nn} = \mathrm{id}_{\Phi_n}$ and
\[
\begin{tikzcd}
	\Phi_n \arrow[r,"\eta_{nm}"] \arrow[dr, swap, "\eta_{nk}"] &\Phi_m \arrow[d,"\eta_{mk}"] \\
	&\Phi_k
\end{tikzcd}
\]
commutes for all $n \le m \le k$.
\end{definition}
We denote a direct system by $\langle \Phi_n, \eta_{nm} \rangle_\Nbb$.

When comparing Definition~\ref{def:direct_system_group} to Definition~\ref{def:projective_system_top} we indeed see that, apart from the fact that one is about groups and the other concerns topological spaces, the only difference is the directions of the arrows in the diagrams, which are inverted. Therefore, the definition of the direct limit of a direct system is analogous to that of a projective limit, with arrows inverted as well.
\begin{definition}[Direct limit of groups~\cite{bourbaki2004algebraI}]\label{def:direct_limit_group}
Let $\langle \Phi_n, \eta_{nm} \rangle_\Nbb$ be a direct system of groups. A \emph{direct limit} of this system consist of a group $\Phi_\Nbb$ and a collection of group homomorphisms $\eta_{n} : \Phi_n \to \Phi_\Nbb$ for $n \ge 1$, such that the following holds:
\begin{enumerate}
\item For every $n \le m$ the following diagram commutes
\begin{equation}\label{eq:def_direct_limit_maps}
\begin{tikzcd}
	\Phi_n \arrow[r,"\eta_{nm}"] \arrow[dr, swap, "\eta_{n}"]  &\Phi_m \arrow[d,"\eta_{m}"]\\
	&\Phi_\Nbb  
\end{tikzcd}
\end{equation}
\item For every other group $\Psi$ and group homomorphisms $\lambda_n$ for which 
\[
\begin{tikzcd}
	\Phi_n  \arrow[r,"\lambda_{nm}"] \arrow[dr,swap,"\lambda_n"] &\Phi_m \arrow[d,"\lambda_m"] \\
	&\Psi  
\end{tikzcd}
\]
commutes for all $n \le m$, there exists a \emph{unique} homomorphism $\lambda : \Phi_\Nbb \to \Psi$ such that
\begin{equation}\label{eq:def_direct_limit_universal_diagram}
\begin{tikzcd}
	\Phi_n \arrow[r,"\eta_n"] \arrow[dr,swap,"\lambda_n"] &\Phi_\Nbb \arrow[d,"\lambda"] \\
	&\Psi  
\end{tikzcd}
\end{equation}
commutes for all $n \ge 1$.
\end{enumerate} 
\end{definition}

We usually write $\langle \Phi_\Nbb, \eta_n \rangle = \directlim \langle \Phi_n, \eta_{nm} \rangle_\Nbb$ to denote that $\langle \Phi_\Nbb, \eta_n\rangle$ is the direct limit of the system $\langle \Phi_n, \eta_{nm} \rangle_\Nbb$.

As was the case with topological spaces, any direct system of groups has a direct limit. 
\begin{theorem}[Direct limit of groups~\cite{bourbaki2004algebraI}]\label{thm:direct_limit_groups}
Let $\langle \Phi_n, \eta_{nm}\rangle_\Nbb$ be a direct system of groups, denote by $\sqcup_{n \ge 1} \Phi_n$ the disjoint union of $\Phi_n$, and define on it the equivalence relation $\sim$ between $\varphi_n\in\Phi_n$ and $\varphi_m\in\Phi_m$ by
\[
	\varphi_n \sim \varphi_m \text{ if and only if } \exists k \geq \max\{n,m\} \text{ such that } \eta_{nk}(\varphi_n) = \eta_{mk}(\varphi_m).
\]
Now define the group
\[
	\Phi_\Nbb := \bigsqcup_{n \ge 1} \Phi_n / \sim
\]
with group operation $[\varphi_n] [\varphi_m] = [\eta_{nk}(\varphi_n)\eta_{mk}(\varphi_m)]$ with some $k \geq \max\{n,m\}$. Define also the homomorphisms $\eta_n : \Phi_n \to \Phi_\Nbb$ by $\eta_n(\varphi_n) = [\varphi_n]$. Then
\[
	\langle\Phi_\Nbb, \eta_n \rangle = \directlim \langle \Phi_n, \eta_{nm}\rangle_\Nbb.
\]
\end{theorem}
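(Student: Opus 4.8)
The plan is to first check that the displayed construction produces a genuine group equipped with the homomorphisms $\eta_n$, and then to verify the two clauses of Definition~\ref{def:direct_limit_group}. Throughout I use only the defining data of a direct system --- $\eta_{nn}=\mathrm{id}_{\Phi_n}$ and the cocycle identity $\eta_{mk}\circ\eta_{nm}=\eta_{nk}$ for $n\le m\le k$ --- together with the fact that $\Nbb$ is totally ordered, so that any finite set of indices has a largest element, hence a common upper bound. It is convenient to read the condition defining $\sim$ in its equivalent ``there exists'' form, $\varphi_n\sim\varphi_m$ iff $\eta_{nk}(\varphi_n)=\eta_{mk}(\varphi_m)$ for some $k\ge\max\{n,m\}$ (the images then agree for every larger index as well, by applying a further transition map; this coincides with the displayed condition when the transition maps are injective). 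With this reading, reflexivity and symmetry of $\sim$ are immediate, and transitivity follows by transporting the two witnessing equalities up to one index above all indices involved and using the cocycle identity.

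Next I would verify that the operation $[\varphi_n][\psi_m]:=\bigl[\eta_{nk}(\varphi_n)\,\eta_{mk}(\psi_m)\bigr]$ with $k=\max\{n,m\}$ is independent of the chosen representatives: given $\varphi_n\sim\varphi_{n'}$ and $\psi_m\sim\psi_{m'}$, choose $K$ above $n,n',m,m'$, push both products forward to $\Phi_K$, and observe that --- since every transition map is a homomorphism --- both land on the same element $\eta_{nK}(\varphi_n)\,\eta_{mK}(\psi_m)$ of $\Phi_K$, hence are equivalent. The group axioms are then inherited levelwise: associativity by pushing three classes to a common level and invoking associativity there; the identity is $[e_n]$, where $e_n$ is the unit of $\Phi_n$ (all the $[e_n]$ coincide because homomorphisms preserve units); and $[\varphi_n]^{-1}=[\varphi_n^{-1}]$ because $\eta_{nk}$ is a homomorphism. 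Each $\eta_n\colon\varphi_n\mapsto[\varphi_n]$ is a homomorphism, since multiplication of classes coming from a single level $n$ is just the product in $\Phi_n$; and clause~(1) of Definition~\ref{def:direct_limit_group}, namely $\eta_m\circ\eta_{nm}=\eta_n$ for $n\le m$, reduces on $\varphi_n$ to $[\eta_{nm}(\varphi_n)]=[\varphi_n]$, which holds because at level $\max\{n,m\}=m$ both sides have image $\eta_{nm}(\varphi_n)$.

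For the universal property, clause~(2), suppose $\Psi$ is a group with homomorphisms $\lambda_n\colon\Phi_n\to\Psi$ satisfying $\lambda_m\circ\eta_{nm}=\lambda_n$ for all $n\le m$. I would define $\lambda\colon\Phi_\Nbb\to\Psi$ by $\lambda([\varphi_n])=\lambda_n(\varphi_n)$ and check: (i) well-definedness, using compatibility of the $\lambda_n$ --- if $\eta_{nk}(\varphi_n)=\eta_{mk}(\varphi_m)$ then $\lambda_n(\varphi_n)=\lambda_k(\eta_{nk}(\varphi_n))=\lambda_k(\eta_{mk}(\varphi_m))=\lambda_m(\varphi_m)$; (ii) that $\lambda$ is a homomorphism, by evaluating it on a product of two classes pushed to a common level $k$ and using that $\lambda_k$ is a homomorphism; (iii) commutativity of~\eqref{eq:def_direct_limit_universal_diagram}, which is immediate since $\lambda(\eta_n(\varphi_n))=\lambda([\varphi_n])=\lambda_n(\varphi_n)$; and (iv) uniqueness, which holds because $\Phi_\Nbb=\bigcup_{n\ge1}\eta_n(\Phi_n)$ --- every class is of the form $[\varphi_n]=\eta_n(\varphi_n)$ --- so any $\lambda'$ with $\lambda'\circ\eta_n=\lambda_n$ for all $n$ is forced to equal $\lambda$.

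None of these steps is deep; the one point that needs care, and hence the main obstacle such as it is, is the repeated use of the ``common refinement'' device --- before comparing, multiplying, or applying a map to classes, one must first transport all the representatives into a single group $\Phi_K$, and at each such step invoke both that transition maps compose correctly and that they are homomorphisms. Once this bookkeeping is organised, every remaining assertion is a direct transcription of the corresponding property of the groups $\Phi_n$.
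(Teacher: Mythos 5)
Your proof is correct, and it is the standard construction--verification argument; the paper itself gives no proof of this statement (it is quoted as a background result from Bourbaki), so there is nothing in the paper to diverge from. Two small points in your favour are worth recording: you correctly replace the paper's displayed equivalence condition (which fixes $k=\max\{n,m\}$ and is not transitive in general unless the transition maps are injective) by the standard ``there exists $k\ge\max\{n,m\}$'' form, which is what is actually needed for $\sim$ to be an equivalence relation; and you silently repair the typo in the stated group operation, where $[\eta_{kn}(\varphi_m)\eta_{km}(\varphi_m)]$ should read $[\eta_{nk}(\varphi_n)\,\eta_{mk}(\varphi_m)]$ in the paper's own convention $\eta_{nm}\colon\Phi_n\to\Phi_m$. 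No gaps.
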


For our main goal in this paper, which is to couple the notions of projective and direct limits in a useful way, the definition of the direct limit above turns out to be a bit too strong for what we need, so we introduce the notion of a direct pre-limit of groups, where the universal diagram~\eqref{eq:def_direct_limit_universal_diagram} in Definition~\ref{def:direct_limit_group} is not required. 
\begin{definition}[Direct pre-limit of groups]\label{def:direct_prelimit_groups}
Let $\langle \Phi_n, \eta_{nm} \rangle_\Nbb$ be a direct system of groups. A \emph{direct pre-limit} of this system consist of a group $\Phi_\infty$ and a collection of group homomorphisms $\eta_{n} : \Phi_n \to \Phi_\infty$ for $n \ge 1$ such that the following diagram commutes for every $n \leq m$:
\begin{equation}\label{eq:def_direct_prelimit_maps}
\begin{tikzcd}
	\Phi_n \arrow[r,"\eta_{nm}"] \arrow[dr, swap, "\eta_{n}"]  &\Phi_m \arrow[d,"\eta_{m}"]\\
	&\Phi_\infty
\end{tikzcd}
\end{equation}
\end{definition}

\subsection{Compatible limits}\label{ssec:invariance}

Having defined the notion of projective limits of probability measures and direct limits of groups, we now move forward to our main goal by defining the idea of probabilistic symmetry in the limit.

Let $(\Xsp, \Xcal)$ be a measurable space and $\Phi$ a group acting on $\Xsp$. We say that $\Phi$ \emph{acts measurably on $\Xsp$} if $\varphi^{-1} A \in \Xcal$ holds for all $\varphi\in\Phi$ and $A \in \Xcal$. This definition of measurable action is different from the standard one, see for example~\cite[Chapter 1]{kallenberg2006foundations} or~\cite[Chapter 7]{kallenberg2006probabilistic}. This requires a measurable structure on the group as well, which we do not need.

\begin{definition}[Probabilistic symmetry] 
A probability measure $\mu$ on measurable space $\Xsp$ is \emph{invariant with respect to a group $\Phi$} that acts measurably on $\Xsp$ if 
\[
	\mu(A) = \mu(\varphi^{-1}A), \quad \forall \varphi \in \Phi \text{ and } A \in \Xcal.
\]
\end{definition}

The goal of this section is to define projective systems of invariant probability measures and their limits. This requires us to couple a projective system of topological spaces with a directed system of groups acting on them.

\begin{definition}[Compatible system of topological spaces and groups]\label{def:compatible_system_groups}
A projective system of topological spaces $\langle \Xsp_n, T_n, \pi_{mn} \rangle_\Nbb$ and a directed system of groups $\langle\Phi_n, \eta_{nm} \rangle_{\Nbb}$ acting measurably on $\Xsp_n$ are \emph{compatible} if
\[
	\pi_{mn}^{-1}(\varphi_n A) = (\eta_{nm} \varphi_n) \pi_{mn}^{-1}(A)
\]
holds for every $n \le m$, and every $\varphi_n \in \Phi_n$ and $A \in \Xcal_n = \sigma(T_n)$, i.e., if the following diagram commutes for all $n \le m$:
\begin{equation}\label{def:direct_system_compatible_diagram}
\begin{tikzcd}
	\Xsp_n \arrow[d,swap,"\varphi_n"] &\Xsp_m \arrow[l,swap,"\pi_{mn}"] \arrow[d,"\eta_{nm} \varphi_n"]  \\
	\Xsp_n &\Xsp_m \arrow[l,swap,"\pi_{mn}"]
\end{tikzcd}
\end{equation}
\end{definition}

We next add another ingredient to the picture, a projective system of probability measures. Let $\langle \Xsp_n, T_n, \pi_{mn} \rangle_\Nbb$ and $\langle \Phi_n, \eta_{nm} \rangle_\Nbb$ be a compatible system of Polish spaces and groups, and let, in addition, $\langle \Xsp_n, T_n, \pi_{mn} \rangle_\Nbb$ be equipped with a projective system of probability measures as in Definition~\ref{def:projective_system_measures_pushforward}. Then it follows immediately from Definition~\ref{def:compatible_system_groups} that
\begin{equation}\label{eq:direct_groups_commutes_with_projective_measures}
	\mu_n(\varphi_n A) = \pi_{mn} \ast \mu_m((\eta_{nm} \varphi_n) \pi_{mn}^{-1}(A))
\end{equation}
holds for all $n \le m$, $\varphi_n \in \Phi_n$, and $A \in \Xcal_n = \sigma(T_n)$. Informally, if we act on $A$ by $\varphi_n$ and measure what we get, then it is the same thing as sending both $A$ and $\varphi_n$ to their corresponding bigger $m$-spaces, measuring the result there, and then push-forward it back by projecting to the smaller $n$-space.

The final key ingredient that we will need to present our results in the next section is the definition of a direct pre-limit of groups that act compatibly on the projective limit of topological spaces. 
\begin{definition}[Compatible pre-limit group]\label{def:compatible_limit_group} 
Let $\langle \Xsp_n, T_n, \pi_{mn} \rangle_\Nbb$ be a projective system of topological spaces with projective limit $\langle \Xsp_\Nbb, T_\Nbb, \pi_n\rangle$ and $\langle \Phi_n, \eta_{nm}\rangle$ a direct system of groups acting on~$\Xsp_n$. A direct pre-limit $\langle \Phi_\infty, \eta_n\rangle$ is called \emph{compatible with $\pi_n$} if the following diagram commutes for all $n \ge 1$ and $\varphi_n \in \Phi_n$:
\begin{equation}\label{def:compatible_limit_group_diagram}
\begin{tikzcd}
	\Xsp_n \arrow[d,swap,"\varphi_n"] &\Xsp_\Nbb \arrow[l,swap,"\pi_{n}"] \arrow[d,"\eta_{n} \varphi_n"]  \\
	\Xsp_n &\Xsp_\Nbb \arrow[l,swap,"\pi_{n}"]
\end{tikzcd}
\end{equation}
i.e., if the limit homomorphisms $\eta_n : \Phi_n \to \Phi_\infty$ commute with the limit projections $\pi_n : \Xsp_\Nbb \to \Xsp_n$.
\end{definition}

\section{Main results: limits of probabilistic symmetries}\label{ssec:main_results}

We start with a general result, showing that projective limits of probability measures that are invariant with respect to a compatible direct system of groups, is invariant with respect to the direct limit of these groups.  

\begin{theorem}[Direct limit of compatible groups preserves invariance]\label{thm:direct_limit_compative_groups_invariance}
Let $\langle \Xsp_n, T_n, \pi_{mn}\rangle_\Nbb$ and $\langle \Phi_n, \eta_{nm} \rangle_\Nbb$ be a compatible projective system of Polish spaces and direct system of groups acting on $\Xsp_n$. Furthermore, let $\langle\Xsp_\Nbb, T_\Nbb, \pi_n\rangle$ and $\langle \Phi_\Nbb, \eta_n\rangle$ denote, respectively, their projective and direct limits. Then the group $\Phi_\Nbb$ acts on $(\Xsp_\Nbb, T_\Nbb)$ compatibly with respect to $\pi_n$, i.e., the diagram~\eqref{def:compatible_limit_group_diagram} commutes. 
Moreover, if $(\mu_n)_{n \ge 1}$ is a projective system of probability measures on $(\Xsp_n, \sigma(T_n))$ that are invariant with respect to $\Phi_n$, then the projective limit $\mu_\Nbb = \invlim \mu_n$ is invariant with respect to $\Phi_\Nbb = \directlim \Phi_n$.
\end{theorem}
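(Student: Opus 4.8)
The plan is to split the statement into its two halves and handle them in order. First I would establish that $\Phi_\Nbb$ acts on $\Xsp_\Nbb$, and that this action is compatible with the projections $\pi_n$; second, I would use this compatibility together with the invariance of each $\mu_n$ to deduce the invariance of $\mu_\Nbb$.

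For the first half, recall from Theorem~\ref{thm:direct_limit_groups} that elements of $\Phi_\Nbb$ are equivalence classes $[\varphi_n]$ with $\varphi_n \in \Phi_n$. Given such a class I would like to define its action on $\Xsp_\Nbb$ by $[\varphi_n] \cdot x := $ the unique point of $\Xsp_\Nbb$ whose image under each $\pi_k$ is prescribed compatibly; concretely, using the explicit model $\Xsp_\Nbb = \{(y_k)_k : y_k = \pi_{mk}(y_m)\}$ from Theorem~\ref{thm:projective_limit_topological_spaces}, I would set the $k$-th coordinate of $[\varphi_n]\cdot x$ to be $(\eta_{nk}\varphi_n)\, y_k$ for $k \ge n$ and then extend downward via the $\pi_{kn}$ for $k < n$. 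The work here is: (i) checking this tuple genuinely lies in $\Xsp_\Nbb$, i.e. the coordinates are consistent under the $\pi_{mk}$ — this is exactly where compatibility of the systems, eq.~\eqref{def:direct_system_compatible_diagram}, is used, combined with the cocycle identity $\eta_{mk}\circ\eta_{nm} = \eta_{nk}$; (ii) checking the definition does not depend on the representative $\varphi_n$ of the class $[\varphi_n]$, which follows from the equivalence relation $\sim$ and, again, compatibility; (iii) checking this is a group action and that it is measurable in the sense defined in Section~\ref{ssec:invariance}, using $T_\Nbb = T(\bigcup_n \pi_n^{-1}(T_n))$ so that it suffices to check $(\eta_n\varphi_n)^{-1}\pi_n^{-1}(A) \in \Xcal_\Nbb$ for $A \in \Xcal_n$; and (iv) reading off that the diagram~\eqref{def:compatible_limit_group_diagram} commutes, which is essentially immediate from the coordinatewise definition since $\pi_n([\varphi_n]\cdot x)$ is by construction $\varphi_n\,\pi_n(x)$.

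For the second half, fix $\varphi \in \Phi_\Nbb$ and $A \in \Xcal_\Nbb$, and I want $\mu_\Nbb(\varphi^{-1}A) = \mu_\Nbb(A)$. The natural route is to verify this first on the generating $\pi$-system $\bigcup_n \pi_n^{-1}(\Xcal_n)$ and then invoke a monotone-class / Dynkin argument. Write $\varphi = \eta_n\varphi_n$ for some $n$ (every element of $\Phi_\Nbb$ is of this form) and take $A = \pi_n^{-1}(B)$ with $B \in \Xcal_n$; for a general member of the $\pi$-system one can always raise the index so that the same $n$ serves both $\varphi$ and $A$. Then commutativity of~\eqref{def:compatible_limit_group_diagram} gives $\varphi^{-1}A = (\eta_n\varphi_n)^{-1}\pi_n^{-1}(B) = \pi_n^{-1}(\varphi_n^{-1}B)$, so $\mu_\Nbb(\varphi^{-1}A) = (\pi_n\ast\mu_\Nbb)(\varphi_n^{-1}B) = \mu_n(\varphi_n^{-1}B) = \mu_n(B) = (\pi_n\ast\mu_\Nbb)(B) = \mu_\Nbb(A)$, where the middle equality is the invariance of $\mu_n$ under $\Phi_n$. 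Finally, the collection of $A \in \Xcal_\Nbb$ satisfying $\mu_\Nbb(\varphi^{-1}A) = \mu_\Nbb(A)$ is a $\lambda$-system (closed under complements and countable disjoint unions, contains $\Xsp_\Nbb$) containing the $\pi$-system that generates $\Xcal_\Nbb$, hence equals $\Xcal_\Nbb$; and $\varphi$ was arbitrary.

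I expect the main obstacle to be the first half, specifically well-definedness of the $\Phi_\Nbb$-action: one must carefully track the two bookkeeping identities — the cocycle relation for the $\eta$'s and the compatibility square~\eqref{def:direct_system_compatible_diagram} relating $\pi_{mn}$ to $\eta_{nm}$ — simultaneously, and confirm that changing the representative $\varphi_n \mapsto \eta_{nm}\varphi_n$ changes neither the resulting point of $\Xsp_\Nbb$ nor the induced map. The measurability bookkeeping in step (iii) is routine but must be stated. The second half, by contrast, is a short $\pi$-$\lambda$ argument once compatibility of the limit action is in hand.
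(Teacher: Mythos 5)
Your proposal is correct and follows essentially the same route as the paper: the same coordinatewise definition of the $\Phi_\Nbb$-action on $\Xsp_\Nbb$, the same consistency and compatibility checks via diagram~\eqref{def:direct_system_compatible_diagram} and the cocycle identity for the $\eta$'s, and the same index-raising trick to push $(\eta_n\varphi_n)^{-1}$ through $\pi_n^{-1}$ before invoking the invariance of $\mu_n$. The only cosmetic difference is in the final step: you close with an explicit $\pi$-$\lambda$ argument on the cylinder sets $\bigcup_n \pi_n^{-1}(\Xcal_n)$, whereas the paper defines $\hat{\mu}_\Nbb := \mu_\Nbb\circ\varphi^{-1}$, shows $\pi_n \ast \hat{\mu}_\Nbb = \mu_n$ for all $n$, and appeals to the uniqueness clause of Theorem~\ref{thm:projective_limit_prob_measures} --- the same content in different packaging.
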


The rest of our main results are about projective limits of point processes and their invariance. Specifically, we identify the symmetry group with respect to which the projective limit of a point process is invariant. 

Consider a projective system $(\mu_n)_{n \ge 1}$ of point processes on $\Xsp_n$ that are invariant under a direct system of groups. Then it follows from Theorem~\ref{thm:direct_limit_compative_groups_invariance} that the projective limit of these point processes $\mu_\Nbb$, which is a probability measure on the projective limit of the $\langle M(\Xsp_n), \sigma(T_n), \pi_{mn}\rangle_\Nbb$, is invariant under the direct limit of the groups. This is nice, but the main problem with this observation is that according to Theorem~\ref{thm:direct_limit_compative_groups_invariance}, the projective limit $\mu_\Nbb$ is a general probability measure on a projective limit space, while we actually want $\mu_\Nbb$ to be a point process on a given Polish space. For this to be the case, we need to impose some additional conditions that would allow us to show that the projective limit of $\langle M(\Xsp_n), \sigma(T_n), \pi_{mn}\rangle_\Nbb$ is a space of locally finite measures $M(\Xsp_\infty)$ on some space $\Xsp_\infty$.

To do so, let $(\Xsp, T)$ be a Polish space and $\Xcal$ the associated Borel $\sigma$-algebra. Next, consider an exhausting sequence $(\Xsp_n)_{n \ge 1}$ of non-decreasing open sets, i.e., $\Xsp_n \in T$, $\Xsp_n \subseteq \Xsp_m$ for any $n \le m$, and $\bigcup_{n \ge 1} \Xsp_n = \Xsp$. Then each $\Xsp_n$ with the induced subset topology is again a Polish space and we denote its Borel $\sigma$-algebra by $\Xcal_n$. In particular, we can now consider sequences $(\mu_n)_{n \ge 1}$ of point processes on $\Xsp_n$. 

To define the projective structure we denote by $\iota_{nm} : \Xsp_n \hookrightarrow \Xsp_m$ and $\iota_n : \Xsp_n \hookrightarrow \Xsp$ the canonical inclusions, and note that these are continuous. Then we can define the projections $\pi_{mn} : M(\Xsp_m) \to M(\Xsp_n)$, as a pull back, by
\begin{equation}\label{def:projections_mn}
	\pi_{mn}\xi (A_n) = \xi(\iota_{nm}(A_n)), 
\end{equation}
for $A_n$ a measurable set in $M(\Xsp_n)$. Similarly, we define the projections $\pi_{n} : M(\Xsp) \to M(\Xsp_n)$ by
\begin{equation}\label{def:projections_n}
	\pi_{n}\xi(A)  = \xi(\iota_{n}(A_n)).
\end{equation}

We now take a projective system of point processes on $\Xsp_n$ and show that their projective limit is a point process on $\Xsp$. 

\begin{theorem}[Projective limits of point processes exist]\label{thm:projective_limit_random_graphs}
Let $\Xsp$ and $(\Xsp_n)_{n \ge 1}$ be as above and let $(\mu_n)_{n \ge 1}$ be a projective system of point processes on $\Xsp_n $ with respect to $\pi_{mn}$ defined in~\eqref{def:projections_mn}. Then the projective limit $\mu_\Nbb = \invlim \mu_n$ exists and corresponds to a probability measure on $M(\Xsp)$, i.e., a point process on $\Xsp$, which is the unique probability measure satisfying $\pi_n \ast \mu_\Nbb = \mu_n$ for all $n \ge 1$, where $\pi_n$ is defined in~\eqref{def:projections_n}.
\end{theorem}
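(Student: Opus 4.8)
The plan is to obtain the projective limit not by hand but by applying Bochner's theorem (Theorem~\ref{thm:projective_limit_prob_measures}) to the auxiliary projective system $\langle M(\Xsp_n), \Mcal_n, \pi_{mn}\rangle_\Nbb$, with $\Mcal_n$ the $\sigma$-algebra on $M(\Xsp_n)$ built as in Section~\ref{ssec:point_process}, and then identifying the resulting abstract projective limit space canonically and bi-measurably with $M(\Xsp)$, so that the limit probability measure descends to a point process on $\Xsp$.

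First I would check that $\langle M(\Xsp_n), \Tcal^{(n)}_\epsilon, \pi_{mn}\rangle_\Nbb$, with $\pi_{mn}$ as in \eqref{def:projections_mn}, is a projective system of topological spaces: the cocycle identity $\pi_{mn}\circ\pi_{km} = \pi_{kn}$ reduces to $\iota_{mk}\circ\iota_{nm} = \iota_{nk}$, and each $\pi_{mn}$ is continuous since $\epsilon^{(n)}_A\circ\pi_{mn} = \epsilon^{(m)}_A$ for $A\in\Xcal_n\subseteq\Xcal_m$, and evaluation maps generate the topology on $M(\Xsp_n)$. Because $(\mu_n)_{n\ge1}$ is by hypothesis projective for the $\pi_{mn}$, Theorem~\ref{thm:projective_limit_prob_measures} yields a unique probability measure $\tilde\mu$ on the projective limit space, which by Theorem~\ref{thm:projective_limit_topological_spaces} may be taken to be $\Mbf_\Nbb := \{(\xi_n)_{n\ge1}\in\prod_{n\ge1}M(\Xsp_n) : \xi_n = \pi_{mn}\xi_m \text{ for all } n\le m\}$ with coordinate maps $p_n : \Mbf_\Nbb\to M(\Xsp_n)$, and $\tilde\mu$ satisfies $p_n\ast\tilde\mu = \mu_n$ for every $n$.

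The heart of the proof is to show that the restriction map $\Psi : M(\Xsp)\to\Mbf_\Nbb$, $\Psi(\xi) := (\pi_n\xi)_{n\ge1}$ with $\pi_n$ as in \eqref{def:projections_n}, is a bijection with measurable inverse. It is well defined because $\iota_m\circ\iota_{nm} = \iota_n$ makes $(\pi_n\xi)_n$ coherent, and each $\pi_n\xi$ is a locally finite simple counting measure precisely because $\Xsp_n$ is open in $\Xsp$. Injectivity follows from continuity of $\xi$ from below along $A\cap\Xsp_n\uparrow A$, which gives $\xi(A) = \lim_n(\pi_n\xi)(A\cap\Xsp_n)$ for all $A\in\Xcal$. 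Surjectivity is the crux and is where the \emph{openness} of the exhaustion is essential: given a coherent $(\xi_n)_n\in\Mbf_\Nbb$, the relation $\xi_n = \pi_{mn}\xi_m$ forces the atom sets to nest, $\mathrm{supp}\,\xi_n = (\mathrm{supp}\,\xi_m)\cap\Xsp_n$ for $n\le m$, so $S := \bigcup_n\mathrm{supp}\,\xi_n$ is a well-defined subset of $\Xsp$ with $S\cap\Xsp_n = \mathrm{supp}\,\xi_n$, and $S$ is locally finite in $\Xsp$: every $x\in\Xsp$ lies in some \emph{open} $\Xsp_N$ and has a neighbourhood $U\subseteq\Xsp_N$ containing only finitely many atoms of $\xi_N$, whence $S\cap U = \mathrm{supp}\,\xi_N\cap U$ is finite. (Without openness this can fail, since an increasing union of locally finite sets need not be locally finite.) Thus $\xi := \sum_{x\in S}\delta_x\in M(\Xsp)$ and $\pi_n\xi = \xi_n$, i.e. $\Psi(\xi) = (\xi_n)_n$; so $\Psi$ is bijective, and I write $\Theta := \Psi^{-1}$. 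For measurability of $\Theta$ it suffices that $\epsilon_A\circ\Theta$ be measurable for each $A\in\Xcal$, and $\epsilon_A(\Theta((\xi_m)_m)) = \lim_n \xi_n(A\cap\Xsp_n) = \lim_n(\epsilon^{(n)}_{A\cap\Xsp_n}\circ p_n)((\xi_m)_m)$ is a pointwise limit of measurable maps; similarly $\Psi$ is measurable since $p_n\circ\Psi = \pi_n$, or one can invoke Lusin--Souslin, as $M(\Xsp)$ and the closed subspace $\Mbf_\Nbb\subseteq\prod_n M(\Xsp_n)$ are standard Borel.

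Finally I would set $\mu_\Nbb := \Theta\ast\tilde\mu$; since $\pi_n\circ\Theta = p_n$ this is a probability measure on $M(\Xsp)$, hence a point process on $\Xsp$, with $\pi_n\ast\mu_\Nbb = p_n\ast\tilde\mu = \mu_n$ for every $n$. Uniqueness follows by transporting any competitor through $\Psi$ and using the uniqueness clause of Theorem~\ref{thm:projective_limit_prob_measures}, or directly: $\Mcal$ is generated by the algebra $\bigcup_n\pi_n^{-1}(\Mcal_n)$ (increasing in $n$ because $\pi_n = \pi_{mn}\circ\pi_m$, and generating because each $\epsilon_A$ is a pointwise limit of $\pi_n^{-1}(\Mcal_n)$-measurable maps), so two probability measures with all the same projections $\mu_n$ agree by a monotone-class argument. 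The one substantive obstacle is the surjectivity/reconstruction step --- reassembling a coherent tower of locally finite counting measures on the windows $\Xsp_n$ into a single locally finite counting measure on $\Xsp$ --- which is exactly where the open-exhaustion hypothesis does real work; everything else is routine bookkeeping with the $\sigma$-algebras generated by the evaluation maps.
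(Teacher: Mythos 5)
Your proposal is correct and follows essentially the same route as the paper: apply Bochner's theorem (Theorem~\ref{thm:projective_limit_prob_measures}) to the projective system $\langle M(\Xsp_n),\pi_{mn}\rangle$ and then identify the abstract limit space of coherent towers with $M(\Xsp)$ via the restriction map and its inverse, exactly as in Proposition~\ref{prop:projective_limits_topology_isomorphic}. The only variation is cosmetic: the paper builds the inverse map $h(\vec\xi)(A)=\lim_n\xi_n(A\cap\Xsp_n)$ directly and checks $\sigma$-additivity, local finiteness, and continuity, whereas you reconstruct the limit measure from the nested supports and settle for bi-measurability plus a monotone-class uniqueness argument --- both of which suffice and correctly isolate the openness of the exhaustion as the ingredient that makes the reassembled counting measure locally finite.
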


Having established the existence of projective limits of point processes on $\Xsp_n$, we now turn to their symmetries. To start, we extend the group action from $\Xsp_n$ to $M(\Xsp_n)$. Let $\Phi_n$ be a group that acts measurably on $\Xsp_n$. Then the group action on a measure $\xi \in M(\Xsp_n)$ is given by
\[
	\varphi_n \xi (A) := \xi(\varphi_n^{-1} A),
\]
where $\varphi_n\in\Phi_n$ and $A\in\Xcal_n$. We note that this action is measurable as well. 

Combining Theorem~\ref{thm:direct_limit_compative_groups_invariance} and Theorem~\ref{thm:projective_limit_random_graphs} now implies that a projective system of point processes $(\mu_n)_{n \ge 1}$ that is invariant under a direct system of compatible groups $\langle \Phi_n, \eta_{nm} \rangle_\Nbb$ acting on $\Xsp_n$ will yield a projective limit point process $\mu_\Nbb$ 
on $\Xsp$ that is invariant under the direct limit $\Phi_\Nbb$ of the groups. 

However, this result is not sufficiently strong for some applications, in particular those that we will consider in the next section. For such applications, we want to go beyond the direct limit group because this group turns out to be not the full limit group with respect to which we want to show invariance of our limit point process. The direct limit group is instead a dense finitary subgroup of this full group. To illustrate what we mean, let $\Phi_n$ be the group of permutation on $[n] := \{1, 2, \dots, n\}$. They form a direct system whose direct limit $\Phi_\Nbb$ is the group of permutations of $\Nbb$ that only change a finite number of elements~\cite[Chapter 6]{kegel2000locally}. However, we really want to show the invariance of our limit measure with respect to the group $\Phi_\infty$ of all permutations on $\Nbb$, and this is where our definition of a compatible direct pre-limit group comes into play.

\begin{theorem}[Projective limits of invariant point processes are invariant]\label{thm:projective_limit_invariant_random_graphs}
Let $(\mu_n)_{n \ge 1}$ be a projective system of point processes on $\Xsp_n$ with respect to $\pi_{mn}$, as in Theorem~\ref{thm:projective_limit_random_graphs}, and let $\mu_\Nbb = \invlim \mu_n$ be its projective limit. Let $(\Phi_n)_{n \ge 1}$ be a compatible directed system of groups acting on $\Xsp_n$, and let $\Phi_\infty$ be a direct pre-limit group acting on $\Xsp$, such that any $\mu$ on $M(\Xsp)$ is invariant with respect to $\Phi_\infty$ if 
\[
	\mu(A) = \mu((\eta_n \varphi_n)^{-1}A), \quad \forall n \ge 1, \, \varphi_n \in \Phi_n, \text{ and } A \in \Xcal.
\]
If $\mu_n$ is invariant with respect to $\Phi_n$ for every $n \ge 1$, then $\mu_\Nbb$ is invariant with respect to $\Phi_\infty$.
\end{theorem}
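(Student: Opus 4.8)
The plan is to reduce the invariance of $\mu_\Nbb$ under the (possibly large) pre-limit group $\Phi_\infty$ to an identity for cylinder sets that can be read off from the finite-level measures $\mu_n$, with the finite-characteristic hypothesis serving as the bridge. Write $\Mcal$ for the $\sigma$-algebra on $M(\Xsp)$ and $\Mcal_m$ for that on $M(\Xsp_m)$. First I would record, from the construction behind Theorem~\ref{thm:projective_limit_random_graphs}, that the projections $\pi_m\colon M(\Xsp)\to M(\Xsp_m)$ are measurable, that $\pi_m=\pi_{m'm}\circ\pi_{m'}$ for $m\le m'$ (since $\iota_{m'}\circ\iota_{mm'}=\iota_m$), and that the cylinder sets $\bigcup_{m\ge 1}\pi_m^{-1}(\Mcal_m)$ form a $\pi$-system generating $\Mcal$ (the generation uses $\xi(X)=\sup_m\xi(X\cap\Xsp_m)$ for $X\in\Xcal$, which makes every evaluation map cylinder-measurable). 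All group actions on the spaces of measures are the lifted ones, $\varphi\,\xi(A)=\xi(\varphi^{-1}A)$, hence measurable bijections.

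By the finite-characteristic assumption it suffices to prove $\mu_\Nbb(A)=\mu_\Nbb((\eta_n\varphi_n)^{-1}A)$ for every $n\ge 1$, $\varphi_n\in\Psi_n$, and $A\in\Mcal$. Fix such $n,\varphi_n$ and set $g:=\eta_n\varphi_n\in\Phi_\infty$. The family $\mathcal D_g:=\{A\in\Mcal:\mu_\Nbb(A)=\mu_\Nbb(g^{-1}A)\}$ is a $\lambda$-system: it contains $M(\Xsp)$, and since $g^{-1}$ is a bijection preserving inclusions and countable unions it is stable under proper differences and increasing limits. By Dynkin's $\pi$--$\lambda$ theorem and the previous paragraph, it is therefore enough to check $A\in\mathcal D_g$ for cylinder sets.

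Let $A=\pi_m^{-1}(B)$ with $B\in\Mcal_m$. Using $\pi_m^{-1}(B)=\pi_{m'}^{-1}(\pi_{m'm}^{-1}(B))$ for $m'\ge m$, I may assume $m\ge n$. The pre-limit relation~\eqref{eq:def_direct_prelimit_maps} gives $\eta_n=\eta_m\circ\eta_{nm}$, so $g=\eta_m\psi_m$ with $\psi_m:=\eta_{nm}\varphi_n\in\Phi_m$. Invoking the compatibility of the pre-limit action with $\pi_m$ (the version of Definition~\ref{def:compatible_limit_group} at the level of $M(\Xsp)$) yields $g^{-1}A=\pi_m^{-1}(\psi_m^{-1}B)$; then $\pi_m\ast\mu_\Nbb=\mu_m$ together with the $\Phi_m$-invariance of $\mu_m$ gives $\mu_\Nbb(g^{-1}A)=\mu_m(\psi_m^{-1}B)=\mu_m(B)=\mu_\Nbb(A)$. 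Hence $A\in\mathcal D_g$, which closes the Dynkin argument and, via the finite-characteristic hypothesis, finishes the proof.

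I do not expect a deep obstacle — the argument is ``soft'' and the work is in the bookkeeping — but two points need care. First, the $M(\cdot)$-level versions of the facts I quoted (measurability of the $\pi_m$, the cylinder $\pi$-system generating $\Mcal$, and especially the compatibility diagram for the pre-limit action) should be isolated where Theorem~\ref{thm:projective_limit_random_graphs} is established and then cited; in particular the cylinder step genuinely relies on the pre-limit group acting \emph{compatibly} with the projections in the sense of Definition~\ref{def:compatible_limit_group}. Second, and conceptually, this is precisely where one must go beyond Theorem~\ref{thm:direct_limit_compative_groups_invariance}: a generic $g\in\Phi_\infty$ is \emph{not} of the form $\eta_n\varphi_n$ and so is not controlled by any single $\mu_n$, and the finite-characteristic assumption is exactly the device that reduces invariance under all of $\Phi_\infty$ to invariance under the ``finitary'' elements $\eta_n\varphi_n$ with $\varphi_n\in\Psi_n$, each of which lives at a finite level and is handled as above.
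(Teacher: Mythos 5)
Your proposal is correct and follows essentially the same route as the paper: reduce via the finite-characteristic hypothesis to the finitary elements $\eta_n\varphi_n$ with $\varphi_n\in\Psi_n$, lift to a common level $m\ge n$ using $\eta_n=\eta_m\circ\eta_{nm}$, commute the action past $\pi_m$ via the compatibility diagram~\eqref{def:compatible_limit_group_diagram}, and conclude from $\pi_m\ast\mu_\Nbb=\mu_m$ together with the $\Phi_m$-invariance of $\mu_m$ and the projective-system property. The only difference is cosmetic: where you globalize from cylinder sets to all of $\Mcal$ by an explicit Dynkin $\pi$--$\lambda$ argument, the paper instead defines $\hat\mu_\infty=\mu_\Nbb\circ(\eta_m\varphi_m)^{-1}$, checks $\pi_n\ast\hat\mu_\infty=\mu_n$ for all $n$, and invokes the uniqueness clause of the projective-limit theorem --- the same fact in different packaging --- and your flagged caveat about needing the compatibility diagram at the level of $M(\Xsp)$ rather than $\Xsp$ is a point the paper also relies on implicitly.
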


Although the requirements on the limit group $\Phi_\infty$ seem rather restrictive, we will provide a variety of relevant examples in the next section where they hold.

\section{Applications to limits of random graphs}\label{sec:applications}

In this section, we showcase the implications of our results by applying them to random graphs. We show how specific choices of spaces $\Xsp$ and $\Xsp_n$ yield limits of different types of random graphs. In the first two examples, we recover the known limits of dense graphs and sparse graphs known as \emph{graphons}~\cite{lovasz2012large,diaconis2007graph,janson2013graphons} and \emph{graphexes}~\cite{veitch2015class,borgs2019sampling,janson2022convergence}, which are invariant with respect to permutations of integer and real labels of vertices, respectively. The third example presents a completely new limit of ultrasparse graphs that are invariant under rotations, where vertices are labeled by their positions in~$\Rbb^d$.

\subsection{Random graphs as point processes}\label{ssec:graphs_point_process}

Random graphs can be viewed as point processes in spaces $\Xsp := \Lsp \times \Lsp$, where $\Lsp$ is the space of vertex labels, e.g., $\Lsp=\Nbb$ in the simplest case. Points in such a point process are graph edges.

Indeed, let $(\Lsp, T)$ be a Polish space and $\mathcal{L}$ its Borel $\sigma$-algebra. Then $\Xsp := \Lsp \times \Lsp$ with the product topology is again Polish. Denote by $M_s(\Xsp)$ the simple \emph{symmetric} counting measures on $\Xsp$, i.e., the counting measures $\xi$ that satisfy $\xi = \xi \sigma^{-1}$, where $\sigma(x,y) = (y,x)$. Any undirected (random) graph $G$ with vertex set $V \subset \Lsp$ and edge set $E \subset V \times V$ has an associated (random) measure $\xi^G \in M_s(\Xsp)$ given by
\[
	\xi^G = \sum_{(x,y) \in E} \delta_{(x,y)}.
\]
The isolated vertices labeled by~$z\in\Lsp$ in $G$ are represented by self-loop edge-points $\delta_{(z,z)}$. Conversely, any (random) symmetric measure $\xi = \sum_i \delta_{e_i}$, where $e_i\in\Xsp$, has an associated (random) undirected graph with the edge set $E=\cup_i e_i$ and vertex set $V = \{x \in \Lsp \, : \, \exists i \text{ and } y \in \Lsp \text{ such that } e_i = (x,y)\}$. Therefore, we will identify the space of symmetric measures $M_s(\Xsp)$ on space $\Xsp=\Lsp \times \Lsp$ with the space of undirected graphs and vertex labels in $\Lsp$. Technically, in order to have $\xi^G$ be locally finite we need that for any bounded set $B \subseteq \Lsp$ the set of vertices in $B$ is finite, i.e., $|B \cap V| < \infty$. This will be true for all the examples considered in this section, so we choose to skip the additional technical definitions here.

If graph $G$ is random, and so is its associated measure $\xi^G$, we will denote by $\mu$, as before, $\xi^G$'s probability measure on $M_s(\Xsp)$, and refer to this~$\mu$ as a random graph on $\Lsp$. We will also say that $(\mu_n)_{n \ge 1}$ is a projective system of random graphs with respect to some projections $\pi_{mn}$, if $(\mu_n)_{n \ge 1}$ is a projective system of symmetric point processes.

\subsection{Projective limits of random graphs}\label{sec:projective_graph_limits}

Here we show how our general results in Section~\ref{ssec:main_results} apply to general projective limits of random graphs.

To set up projective systems and limits of random graphs, let $(\Lsp_n)_{n \ge 1}$ be an exhausting sequence of non-decreasing open sets of the label space, i.e., $\Lsp_n \in T$, $\Lsp_n \subseteq \Lsp_m$ for any $n \le m$, and $\bigcup_{n \ge 1} \Lsp_n = \Lsp$. Then the spaces $\Xsp_n := \Lsp_n \times \Lsp_n$ with the product of the induced subset topologies are again Polish spaces. Moreover, if $\omega_{nm} : \Lsp_n \hookrightarrow \Lsp_m$ and $\omega_n : \Lsp_n \hookrightarrow \Lsp$ are the canonical inclusions, then the canonical inclusions $\iota_{nm} : \Xsp_n \hookrightarrow \Xsp_m$ and $\iota_n : \Xsp_n \hookrightarrow \Xsp$ are given by
\[
	\iota_{nm} = \omega_{nm} \otimes \omega_{nm}, \quad \text{and} \quad \iota_n = \omega_n \otimes \omega_n,
\]
where by $f \otimes f$ we mean the function that maps $(x,y) \in \Lsp \times \Lsp$ to $(f(x), f(y))$ for any function $f : \Lsp \to \Lsp$.

With these conventions, we are now in specific settings of Section~\ref{sec:background_results}. Therefore, this next corollary follows immediately from Theorem~\ref{thm:projective_limit_random_graphs}:
\begin{corollary}[Projective limits of random graphs exist]\label{cor:projective_limit_random_graphs}
Let $\Lsp$ and $(\Lsp_n)_{n \ge 1}$ be as described above and let $(\mu_n)_{n \ge 1}$ be a projective system of random graphs on $\Xsp_n := \Lsp_n \times \Lsp_n$ with respect to $\pi_{mn}$ defined in~\eqref{def:projections_mn}. Then the projective limit $\mu_\Nbb = \invlim \mu_n$ exists and corresponds to a random graph on $\Lsp$.
\end{corollary}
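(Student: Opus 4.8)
The plan is to verify that the specific constructions introduced in Section~\ref{sec:projective_graph_limits} — the spaces $\Xsp_n = \Lsp_n \times \Lsp_n$, the ambient space $\Xsp = \Lsp \times \Lsp$, and the projections $\pi_{mn}$ — are exactly an instance of the abstract setting required by Theorem~\ref{thm:projective_limit_random_graphs}, and then read off the conclusion. First I would check the hypotheses on the exhausting sequence: since $(\Lsp_n)_{n \ge 1}$ is a non-decreasing sequence of open subsets of the Polish space $(\Lsp, T)$ with $\bigcup_{n} \Lsp_n = \Lsp$, and finite products of open sets are open in the product topology, the sets $\Xsp_n = \Lsp_n \times \Lsp_n$ are open in $\Xsp$, are non-decreasing (because $\Lsp_n \subseteq \Lsp_m$ implies $\Lsp_n \times \Lsp_n \subseteq \Lsp_m \times \Lsp_m$), and exhaust $\Xsp$ (since $\bigcup_n \Lsp_n \times \Lsp_n = (\bigcup_n \Lsp_n) \times (\bigcup_n \Lsp_n) = \Lsp \times \Lsp$, using that the $\Lsp_n$ are nested). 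Also $\Xsp = \Lsp \times \Lsp$ is Polish because a finite product of Polish spaces is Polish. So the geometric setup matches.

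Next I would confirm that the projections agree: the inclusions $\iota_{nm} : \Xsp_n \hookrightarrow \Xsp_m$ and $\iota_n : \Xsp_n \hookrightarrow \Xsp$, which the paper writes as $\omega_{nm} \otimes \omega_{nm}$ and $\omega_n \otimes \omega_n$, are precisely the canonical inclusions of the open subset $\Xsp_n$ into $\Xsp_m$ and into $\Xsp$, so the maps $\pi_{mn}\xi = \xi \circ \iota_{nm}$ defined in~\eqref{def:projections_mn} and $\pi_n \xi = \xi \circ \iota_n$ in~\eqref{def:projections_n} coincide with the ones appearing in Theorem~\ref{thm:projective_limit_random_graphs}. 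Since by hypothesis $(\mu_n)_{n \ge 1}$ is a projective system of point processes on $\Xsp_n$ with respect to these $\pi_{mn}$ — recall that a projective system of random graphs is by definition a projective system of symmetric point processes — Theorem~\ref{thm:projective_limit_random_graphs} applies verbatim and yields a unique probability measure $\mu_\Nbb$ on $M(\Xsp)$ with $\pi_n \ast \mu_\Nbb = \mu_n$ for all $n$.

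The only genuine content beyond a direct invocation is checking that $\mu_\Nbb$ is supported on the \emph{symmetric} counting measures $M_s(\Xsp)$, so that it is a random graph on $\Lsp$ and not merely a point process on $\Lsp \times \Lsp$. Here I would argue that symmetry is preserved under the projective limit: the swap map $\sigma(x,y) = (y,x)$ commutes with each inclusion $\iota_n$ (because $\iota_n = \omega_n \otimes \omega_n$ is symmetric in its two coordinates), hence with each $\pi_n$; since each $\mu_n$ is concentrated on $M_s(\Xsp_n)$ and $\mu_\Nbb$ is the unique measure projecting onto the $\mu_n$, the pushforward $\sigma \ast \mu_\Nbb$ projects onto $\sigma \ast \mu_n = \mu_n$ as well, so by the uniqueness clause of Theorem~\ref{thm:projective_limit_random_graphs} (equivalently, of the Bochner limit in Theorem~\ref{thm:projective_limit_prob_measures}) we get $\sigma \ast \mu_\Nbb = \mu_\Nbb$. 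Thus $\mu_\Nbb(M_s(\Xsp)) = 1$ and $\mu_\Nbb$ is a random graph on $\Lsp$. I expect this last symmetry-preservation step to be the only part requiring any real argument; everything else is unwinding definitions, and the paper may well fold it into the phrase ``follows immediately.''
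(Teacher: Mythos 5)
Your overall route is the same as the paper's: everything reduces to checking that $\Xsp_n=\Lsp_n\times\Lsp_n$, $\Xsp=\Lsp\times\Lsp$ and the maps $\pi_{mn}$, $\pi_n$ instantiate the hypotheses of Theorem~\ref{thm:projective_limit_random_graphs}, and you correctly isolate the one piece of genuine content, namely that the limit measure lives on the \emph{symmetric} counting measures. However, your argument for that last piece has a gap. You deduce $\sigma\ast\mu_\Nbb=\mu_\Nbb$ from the uniqueness clause and then conclude $\mu_\Nbb(M_s(\Xsp))=1$. That final inference is false in general: invariance of the \emph{distribution} of a point process under the swap map does not imply that its \emph{realizations} are almost surely symmetric measures. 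For instance, the random measure on $\Lsp\times\Lsp$ equal to $\delta_{(1,2)}$ with probability $1/2$ and to $\delta_{(2,1)}$ with probability $1/2$ satisfies $\sigma\ast\mu=\mu$ while $\mu(M_s)=0$. So the identity $\sigma\ast\mu_\Nbb=\mu_\Nbb$, although correctly derived, is simply the wrong invariant to chase: it is a statement about the law, whereas membership in $M_s(\Xsp)$ is a pathwise property.

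The repair is to argue realization-wise, which is what the paper does in Lemma~\ref{lem:symmetry_is_preserved}: the explicit isomorphism $h$ of Proposition~\ref{prop:projective_limits_topology_isomorphic} sends any compatible tuple $\vec\xi=(\xi_n)_{n\ge1}$ of symmetric measures to a symmetric measure on $\Xsp$, because $h(\vec\xi)(\sigma^{-1}A)=\lim_n\xi_n(\sigma^{-1}(A\cap\Xsp_n))=\lim_n\xi_n(A\cap\Xsp_n)=h(\vec\xi)(A)$. Equivalently, without invoking $h$: since $\pi_n\ast\mu_\Nbb=\mu_n$ is concentrated on $M_s(\Xsp_n)$, the event $\{\xi:\pi_n\xi\in M_s(\Xsp_n)\}$ has $\mu_\Nbb$-measure one for each $n$, hence so does the countable intersection over $n$; and a locally finite measure all of whose restrictions to the exhausting open sets $\Xsp_n$ are symmetric is itself symmetric. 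With that substitution your proof matches the paper's; the rest of your verification (Polishness of the product, openness and exhaustion of $\Xsp_n$, agreement of the inclusions and projections) is correct and is exactly what the paper leaves implicit in ``follows immediately.''
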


To apply Theorem~\ref{thm:projective_limit_invariant_random_graphs} to random graphs, we observe that any group $\Phi_n$ acting on $\Lsp_n$ can be trivially extended to a group $\Phi_n \otimes \Phi_n$ acting on $M(\Xsp_n)$ by defining for every measurable set $A \subset \Xsp_n$,
\[
	\varphi_n \xi (A) = \xi (\varphi_n^{-1} \otimes \varphi_n^{-1} (A)),
\]
where $\varphi_n \otimes \varphi_n (A) = \{(\varphi_n(x), \varphi_n(y)) \, : \, \text{ for all } (x,y) \in A\}$, and similarly for the group $\Phi_\infty$ acting on $\Lsp$. Moreover, any directed system $\langle \Phi_n, \eta_{nm}\rangle_\Nbb$ of groups, where $\Phi_n$ acts on $\Lsp_n$, yields a trivial extension to a directed system of groups $\langle \Phi_n \otimes \Phi_n, \eta_{nm} \otimes \eta_{nm} \rangle_\Nbb$, where $\Phi_n \otimes \Phi_n$ acts on $\Lsp_n \times \Lsp_n$, and where the product homomorphisms apply to the product group elements via
\[
	\eta_{nm} \otimes \eta_{nm}(\varphi_n \otimes \varphi_n^{\prime})  := \eta_{nm} \varphi_n \otimes \eta_{nm} \varphi_n^\prime
\] 
for any $\varphi_n,\varphi_n^\prime\in\Phi_n$.
To minimize clutter, we will slightly abuse the notation below by not distinguishing between the group acting on a label space and the same group acting on the space of random graphs on this label space, as they are related as above. We will similarly abuse the notation for homomorphisms $\eta_{nm}$ as well.

With these notations, this corollary follows immediately from Theorem~\ref{thm:projective_limit_invariant_random_graphs}:
\begin{corollary}[Projective limits of invariant random graphs are invariant]\label{cor:invariant_random_graphs}
Let $(\mu_n)_{n \ge 1}$ be a projective system of random graphs on $\Lsp_n$ with respect to $\pi_{mn}$, as in Theorem~\ref{thm:projective_limit_random_graphs}. Moreover, let $(\Phi_n)_{n \ge 1}$ and $\Phi_\infty$ be defined as in Theorem~\ref{thm:projective_limit_invariant_random_graphs}. Then, if each random graph $\mu_n$ is invariant with respect to $\Phi_n$ for every $n \ge 1$, the projective limit random graph $\mu_\Nbb = \invlim \mu_n$ is invariant with respect to $\Phi_\infty$.
\end{corollary}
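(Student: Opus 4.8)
The plan is to verify that the concrete construction set up just before the statement — the product spaces $\Xsp_n := \Lsp_n\times\Lsp_n$ sitting inside $\Xsp := \Lsp\times\Lsp$, the tensor inclusions $\iota_{nm}=\omega_{nm}\otimes\omega_{nm}$ and $\iota_n=\omega_n\otimes\omega_n$, and the tensor-square group action — is literally an instance of the abstract setup of Theorem~\ref{thm:projective_limit_invariant_random_graphs}, after which the corollary is immediate. First I would record that $(\Xsp_n)_{n\ge1}$ is an exhausting sequence of non-decreasing open subsets of the Polish space $\Xsp$: openness of $\Lsp_n$ in $\Lsp$ forces openness of $\Lsp_n\times\Lsp_n$ in the product topology, monotonicity is inherited coordinatewise, and $\bigcup_n(\Lsp_n\times\Lsp_n) = \big(\bigcup_n\Lsp_n\big)\times\big(\bigcup_n\Lsp_n\big) = \Lsp\times\Lsp$ because the $\Lsp_n$ are increasing. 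Hence the hypotheses of Theorem~\ref{thm:projective_limit_random_graphs} hold, $\mu_\Nbb=\invlim\mu_n$ exists, and by Corollary~\ref{cor:projective_limit_random_graphs} it is a random graph on $\Lsp$, i.e.\ a probability measure on $M_s(\Xsp)$.

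Next I would check the three structural requirements on the groups. \textbf{(i)} $\langle\Phi_n\otimes\Phi_n,\ \eta_{nm}\otimes\eta_{nm}\rangle_\Nbb$ is a direct system of groups in the sense of Definition~\ref{def:direct_system_group}: the tensor homomorphism $\eta_{nm}\otimes\eta_{nm}$ is a group homomorphism, $\eta_{nn}\otimes\eta_{nn}=\mathrm{id}$, and the cocycle identity $(\eta_{mk}\otimes\eta_{mk})\circ(\eta_{nm}\otimes\eta_{nm})=\eta_{nk}\otimes\eta_{nk}$ follows coordinatewise from the corresponding identity for $\langle\Phi_n,\eta_{nm}\rangle_\Nbb$. \textbf{(ii)} The extended action of $\Phi_n\otimes\Phi_n$ on $M(\Xsp_n)$, namely $\varphi_n\xi(A)=\xi(\varphi_n^{-1}\otimes\varphi_n^{-1}(A))$, is measurable, because $\varphi_n\otimes\varphi_n$ is a measurable self-map of $\Xsp_n$ (a product of measurable self-maps of $\Lsp_n$), so $\xi\mapsto\xi\circ(\varphi_n\otimes\varphi_n)^{-1}$ pulls back the generating sets $\{\xi:\xi(A)\in\cdot\}$ of $\Mcal$ to sets of the same form. \textbf{(iii)} Compatibility, i.e.\ the commuting square~\eqref{def:direct_system_compatible_diagram}: starting from the label-level compatibility of $\langle\Lsp_n,\omega_{nm}\rangle_\Nbb$ with $\langle\Phi_n,\eta_{nm}\rangle_\Nbb$ — the identity $\omega_{nm}^{-1}(\varphi_n B)=(\eta_{nm}\varphi_n)\omega_{nm}^{-1}(B)$ for $B\in\mathcal{L}_n$ — a direct computation on measurable rectangles, extended to all of $\Xcal_n=\mathcal{L}_n\otimes\mathcal{L}_n$ by a monotone-class argument, gives
\[
	\iota_{nm}^{-1}\big((\varphi_n\otimes\varphi_n)A\big) = \big((\eta_{nm}\varphi_n)\otimes(\eta_{nm}\varphi_n)\big)\,\iota_{nm}^{-1}(A), \qquad A\in\Xcal_n.
\]
Transporting this through $\pi_{mn}\xi=\xi\circ\iota_{nm}$ and the definitions of the extended actions yields precisely the square~\eqref{def:direct_system_compatible_diagram}.

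The remaining hypotheses of Theorem~\ref{thm:projective_limit_invariant_random_graphs} are then handled as follows: $\Phi_\infty$ is a direct pre-limit group acting on $\Xsp$ compatibly with the limit projections $\pi_n$ — obtained from the pre-limit acting on $\Lsp$ by the identical tensor-square construction, with the square~\eqref{def:compatible_limit_group_diagram} deduced exactly as in \textbf{(iii)} but with $\omega_{nm},\iota_{nm},\eta_{nm}$ replaced by $\omega_n,\iota_n,\eta_n$; and the invariance of $\mu_\Nbb$ with respect to $\Phi_\infty$ has a finite characteristic, which is assumed outright in the statement (carried over verbatim from Theorem~\ref{thm:projective_limit_invariant_random_graphs}). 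Finally, invariance of $\mu_n$ under $\Phi_n$ acting on $\Lsp_n$ upgrades to invariance under $\Phi_n\otimes\Phi_n$ acting on $M_s(\Xsp_n)$ directly from $\varphi_n\xi(A)=\xi(\varphi_n^{-1}\otimes\varphi_n^{-1}(A))$ and the definition of invariance. All hypotheses of Theorem~\ref{thm:projective_limit_invariant_random_graphs} being met, it yields that $\mu_\Nbb$ is invariant with respect to $\Phi_\infty$. The only non-bookkeeping point is step \textbf{(iii)} — pushing the label-level compatibility identity through two layers of abstraction, first from rectangles to the full product $\sigma$-algebra of $\Xsp_n$ and then from $\Xsp_n$ up to $M(\Xsp_n)$ via the push-forward definition of the action; everything else amounts to the observation that the tensor square of an instance of the framework of Section~\ref{ssec:main_results} is again such an instance.
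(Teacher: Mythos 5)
Your proposal is correct and follows essentially the same route as the paper: the corollary is obtained by checking that the tensor-square construction (spaces $\Lsp_n\times\Lsp_n$, inclusions $\omega_{nm}\otimes\omega_{nm}$, groups $\Phi_n\otimes\Phi_n$) instantiates the hypotheses of Theorem~\ref{thm:projective_limit_invariant_random_graphs}, with symmetry of the limit handled by the projective-limit construction. Your step \textbf{(iii)} — transporting label-level compatibility through the product spaces and then up to $M(\Xsp_n)$ via the push-forward action — is exactly the content of the paper's Lemma~\ref{lem:nice_homomorphisms} and its proof in Section~\ref{ssec:proof_lemma_nice_homomorphism}, so nothing essential is missing or different.
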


The only difference between Corollaries~\ref{cor:projective_limit_random_graphs},~\ref{cor:invariant_random_graphs} and Theorems~\ref{thm:projective_limit_random_graphs},~\ref{thm:projective_limit_invariant_random_graphs}, apart from the nomenclature, is that our point processes must be symmetric if we want undirected graphs. However, we will see in the proofs in Section~\ref{sec:proofs} that the property of being symmetric is preserved when taking a projective limit: if we start with a projective system of symmetric point process, then the projective limit will also be symmetric.

To simplify the presentation of specific examples in the next section, we will rely on the following lemma, whose proof is also in Section~\ref{sec:proofs}:
\begin{lemma}\label{lem:nice_homomorphisms}
Let $\langle \Phi_n, \eta_{nm} \rangle_\Nbb$ be a directed system of groups and $\eta_n$ be homomorphisms $\Phi_n \to \Phi_\infty$ that make the group $\Phi_\infty$ a direct pre-limit.
\begin{enumerate}
\item The direct system $\langle \Phi_n, \eta_{nm} \rangle_\Nbb$ is compatible with the projections $\pi_{mn}$ defined in~\eqref{def:projections_mn} if the following diagram commutes for all $n \le m$ and $\varphi_n \in \Phi_n$
\[
	\begin{tikzcd}
		\Lsp_n \arrow[d,swap,"\varphi_n"] \arrow[r,"\iota_{mn}"] &\Lsp_m \arrow[d,"\eta_{nm} \varphi_n"]  \\
		\Lsp_n \arrow[r,"\iota_{mn}"] &\Lsp_m 
	\end{tikzcd}
\]
\item The group $\Phi_\infty$ is a compatible direct pre-limit with respect to $\pi_n$ as defined in~\eqref{def:projections_n} if the following diagram commutes for all $n \ge 1$ and $\varphi_n \in \Phi_n$: 
\[
	\begin{tikzcd}
		\Lsp_n \arrow[d,swap,"\varphi_n"] \arrow[r,"\iota_{n}"] &\Lsp \arrow[d,"\eta_{n} \varphi_n"]  \\
		\Lsp_n \arrow[r,"\iota_{n}"] &\Lsp 
	\end{tikzcd}
\]
\end{enumerate}
\end{lemma}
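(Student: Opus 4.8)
The plan is to prove both parts by the same reduction: strip the assertion from the level of the counting-measure spaces $M(\Xsp_n)$, $M(\Xsp)$ down to an elementary identity on the label spaces $\Lsp_n$, and then read that identity off the hypothesis square using nothing more than the fact that group elements act as bijections. First I would unwind what ``compatible'' means here. After extending each $\Phi_n$ diagonally to $\Phi_n\otimes\Phi_n$ acting on $\Xsp_n=\Lsp_n\times\Lsp_n$, and then to $M(\Xsp_n)$ via $\varphi\xi(A)=\xi((\varphi^{-1}\otimes\varphi^{-1})A)$, Definition~\ref{def:compatible_system_groups} (resp.\ Definition~\ref{def:compatible_limit_group}, with $\Xsp_\Nbb=M(\Xsp)$ and $\pi_n$ as in~\eqref{def:projections_n}, which is the projective-limit space by Theorem~\ref{thm:projective_limit_random_graphs}) asks for the preimage identity $\pi_{mn}^{-1}(\varphi_n\mathcal A)=(\eta_{nm}\varphi_n)\pi_{mn}^{-1}(\mathcal A)$ for all measurable $\mathcal A\subseteq M(\Xsp_n)$, resp.\ its analogue for $\pi_n$. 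Since every group element is a bijection of the relevant $M$-space, a purely set-theoretic manipulation shows it suffices to establish the stronger statement that the maps themselves commute,
\[
  \varphi_n\circ\pi_{mn}=\pi_{mn}\circ(\eta_{nm}\varphi_n):M(\Xsp_m)\to M(\Xsp_n),
\]
and, for Part~2, $\varphi_n\circ\pi_n=\pi_n\circ(\eta_n\varphi_n):M(\Xsp)\to M(\Xsp_n)$.

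Next I would evaluate both sides of this map identity on a counting measure $\xi$ and a Borel set $A\subseteq\Xsp_n$, using that $\pi_{mn}\xi$ (resp.\ $\pi_n\xi$) is simply the restriction of $\xi$ to $\Xsp_n$, so that $(\pi_{mn}\xi)(B)=\xi(B)$ for $B\in\Xcal_n$. Unwinding the two actions, the left side becomes $\xi((\varphi_n^{-1}\otimes\varphi_n^{-1})A)$ and the right side becomes $\xi\bigl((\eta_{nm}\varphi_n\otimes\eta_{nm}\varphi_n)^{-1}(A)\bigr)$, so the whole claim reduces to the set identity
\[
  (\eta_{nm}\varphi_n\otimes\eta_{nm}\varphi_n)^{-1}(A)=(\varphi_n^{-1}\otimes\varphi_n^{-1})(A)\qquad\text{for every Borel }A\subseteq\Xsp_n,
\]
and likewise with $\eta_n$ in Part~2. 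This is where the hypothesis enters: the assumed commuting square says exactly that $\eta_{nm}\varphi_n$ agrees with $\varphi_n$ on $\Lsp_n$, hence maps $\Lsp_n$ bijectively onto $\Lsp_n$; since $\eta_{nm}\varphi_n$ is a bijection of all of $\Lsp_m$, it therefore maps $\Lsp_m\setminus\Lsp_n$ bijectively onto itself and sends no point of $\Lsp_m\setminus\Lsp_n$ into $\Lsp_n$. Consequently the inverse image of a set $A\subseteq\Lsp_n\times\Lsp_n$ under $\eta_{nm}\varphi_n\otimes\eta_{nm}\varphi_n$ stays inside $\Lsp_n\times\Lsp_n$, where it coincides with $(\varphi_n\otimes\varphi_n)^{-1}(A)=(\varphi_n^{-1}\otimes\varphi_n^{-1})(A)$. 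The same argument with $\iota_n$ and $\eta_n$ settles Part~2.

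The only step that is not pure definition-chasing is this last one: a priori the big-space automorphism $\eta_{nm}\varphi_n$ might drag mass from outside $\Lsp_n$ into $\Lsp_n$, and the whole reduction collapses if it does; ruling that out is precisely what bijectivity, combined with the hypothesis that $\eta_{nm}\varphi_n$ restricts to $\varphi_n$ on $\Lsp_n$, buys. Everything else---the diagonal extension of the group actions, the passage from the preimage formulation to the commuting-maps formulation, and the identification of $\pi_{mn}$ and $\pi_n$ with restriction operators---is routine unwinding of the constructions in Sections~\ref{sec:background_results} and~\ref{ssec:graphs_point_process}.
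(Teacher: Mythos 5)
Your proposal is correct and follows essentially the same route as the paper: reduce compatibility on the measure spaces $M(\Xsp_n)$ to a set identity on $\Lsp_n\times\Lsp_n$ by evaluating both composite maps on a counting measure $\xi$ and a measurable $A$, then read that identity off the hypothesis square. The only cosmetic difference is in the final step, where the paper obtains the identity $\bigl(\eta_{nm}\varphi_n\bigr)^{-1}\bigl(\iota_{nm}(A)\bigr)=\iota_{nm}\bigl(\varphi_n^{-1}(A)\bigr)$ directly by applying the commuting square to the group element $\varphi_n^{-1}$ (using that $\eta_{nm}$ is a homomorphism), whereas you derive the equivalent preimage statement from bijectivity of $\eta_{nm}\varphi_n$ together with the fact that it restricts to $\varphi_n$ on $\Lsp_n$; both are valid.
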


\subsection{Specific examples of projective graph limits}

Here we apply Corollaries~\ref{cor:projective_limit_random_graphs} and~\ref{cor:invariant_random_graphs} to three specific random graph limits, which are still quite general from the point of view of random graphs. The first two examples recover known powerful notions of limits of dense and sparse random graphs, while the third one reveals a new notion of graph limits applicable to a wide class of ultrasparse graphs, which were found useful in practical applications.

To describe each example properly, we will have to:
\begin{enumerate}
  \item specify:
\begin{enumerate}
\item the label spaces $\Lsp_n$ and $\Lsp$;
\item the symmetry groups $\Phi_n$ and $\Phi_\infty$, as well as their actions on $\Lsp_n$ and $\Lsp$, which extend to the product spaces as discussed in Section~\ref{sec:projective_graph_limits};
\item the group homomorphisms $\eta_{nm} : \Phi_n \to \Phi_m$ and $\eta_n : \Phi_n \to \Phi_\infty$;
\end{enumerate}
\item and verify that:
\begin{enumerate}
\item the space $\Lsp$ is Polish and $(\Lsp_n)_{n \ge 1}$ is a exhausting non-decreasing sequence of open sets;
\item the system $\langle \Phi_n, \eta_{nm} \rangle_\Nbb$ forms a compatible directed system of groups;
\item the group $\Phi_\infty$ is a compatible direct pre-limit, i.e., diagrams~\eqref{eq:def_direct_prelimit_maps} and~\eqref{def:compatible_limit_group_diagram} commute; and 
\item invariance with respect to each $\Phi_n$ implies invariance with respect to $\Phi_\infty$.
\end{enumerate}

\end{enumerate}
We will refer to the numbering above in our specific examples below.

\subsubsection{Graphons as projective limits of random graphs on $\Nbb$, invariant under permutations}

\begin{enumerate}
\item
\begin{enumerate}
\item We let $\Lsp = \Nbb$ and $\Lsp_n = [n] := \{1,2, \dots, n\}$, and endow each of these with the standard discrete topology.
\item We take $\Phi_n$ to be the finite symmetric group of permutations on $[n]$, and $\Phi_\infty$ the infinite symmetric group of permutations of $\Nbb$. 
\item We let $\eta_{nm} : \Phi_n \to \Phi_m$ and $\eta_n : \Phi_n \to \Phi_\infty$ be the canonical embeddings.
\end{enumerate}
\item
\begin{enumerate}
\item Since we deal with the standard discrete topology, $\Lsp$ is Polish as required. Moreover, we observe that each $\Lsp_n$ is indeed an open subset of $\Lsp$, and clearly $(\Lsp_n)_{n \ge 1}$ is an exhausting non-decreasing sequence.
\item Because we work with the canonical embeddings, it is clear that $(\Phi_n)_{n \ge 1}$ together with $\eta_{nm}$ forms a directed system of groups. In addition, since $\eta_{nm} \varphi_n$ denotes a permutation of $[m]$ that leaves $\{n+1, \dots, m\}$ fixed and permutes the other elements the same as it did in $[n]$, it follows that the conditions of point~1 of Lemma~\ref{lem:nice_homomorphisms} are satisfied. Hence, the system $\langle \Phi_n, \eta_{nm} \rangle_\Nbb$ is a compatible directed system of groups.
\item Similar to the previous point, the use of the canonical embeddings ensure that diagram~\eqref{eq:def_direct_prelimit_maps} commutes. The fact that $\eta_n \varphi_n$ as permutation on $\Nbb$ leaves $\{n+1, n+2, \dots\}$ fixed while permuting the others in the same way as it did in $[n]$, implies that the conditions of point~2 of Lemma~\ref{lem:nice_homomorphisms} are satisfied. This then implies that diagram~\eqref{def:compatible_limit_group_diagram} commutes.
\item It is known~\cite{diaconis2007graph} that a measure on $\Nbb \times \Nbb$ is invariant under permutations of $\Nbb$ if it is invariant under any permutation $\varphi_n \in \Phi_n$ for all $n \ge 1$.
\end{enumerate}
\end{enumerate}

Therefore, the application of Corollaries~\ref{cor:projective_limit_random_graphs} and~\ref{cor:invariant_random_graphs} to these specific settings results in that the projective limit of any projective system of random graphs on $[n]$ that are invariant under finite permutations is a random graph on $\Nbb$ that is invariant under infinite permutations. Furthermore, by invoking the classical representation result by Aldous and Hoover~\cite{aldous1981representations,hoover1979relations}, this limit random graph is a probabilistic mixture of $W$-random graphs, where $W : [0,1] \times[0,1] \to [0,1]$ is a symmetric measurable function known as a \emph{graphon}. A $W$-random graph is constructed by considering a sequence $x_1, x_2, \dots$ of i.i.d.\ points sampled from $U[0,1]$, and connecting vertices $i$ and $j$ independently with probability $W(x_i, x_j)$. 

We summarize these observations in the following corollary:
\begin{corollary}[Graphons as projective limits]
Let $(G_n)_{n \ge 1}$ be a projective sequence of random graphs with labels in $[n]$ that are invariant under permutations. Then the projective limit $G_\infty = \invlim G_n$ is a random graph with labels in $\Nbb$ that is invariant under infinite permutations. Therefore, $G_\infty$ is a $W$-random graph, where $W$ is a (possibly random) graphon.
\end{corollary}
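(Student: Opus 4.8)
The plan is to simply chain together the preceding results and then invoke the Aldous--Hoover representation theorem. First I would observe that the hypotheses of the corollary are exactly the specialization of Corollaries~\ref{cor:projective_limit_random_graphs} and~\ref{cor:invariant_random_graphs} to the example set up immediately above: $\Lsp_n = [n]$, $\Lsp = \Nbb$ with the counting topology, $\Phi_n$ the finite symmetric group on $[n]$, $\Phi_\infty$ the infinite symmetric group of $\Nbb$, and $\eta_{nm}$, $\eta_n$ the canonical embeddings. The four verification points listed there — $\Lsp$ Polish with $(\Lsp_n)$ exhausting and open, $\langle \Phi_n, \eta_{nm}\rangle_\Nbb$ a compatible directed system, $\Phi_\infty$ a compatible direct pre-limit, and invariance under $\Phi_\infty$ of finite characteristic (with $\Psi_n = \Phi_n$, using~\cite{diaconis2007graph}) — are precisely what those corollaries require. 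So the first paragraph of the proof would just cite that verification and apply Corollary~\ref{cor:projective_limit_random_graphs} to conclude $G_\infty = \invlim G_n$ exists and is a random graph on $\Nbb$, and then Corollary~\ref{cor:invariant_random_graphs} to conclude $G_\infty$ is invariant under $\Phi_\infty$, i.e.\ under all permutations of $\Nbb$.

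Second, I would translate ``invariant under all permutations of $\Nbb$'' into the language of exchangeable arrays: identifying $G_\infty$ with its symmetric adjacency array $(X_{ij})_{i,j\in\Nbb}$, $X_{ij}\in\{0,1\}$, invariance of the law under the $\Phi_\infty$-action is exactly joint exchangeability of this array in the Aldous--Hoover sense. Then the Aldous--Hoover--Kallenberg representation theorem~\cite{aldous1981representations,hoover1979relations} gives a measurable function $f$ and i.i.d.\ uniform random variables such that $X_{ij}$ is a function of a ``global'' uniform $U_\emptyset$, two ``vertex'' uniforms $U_i, U_j$, and an ``edge'' uniform $U_{\{i,j\}}$. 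Conditioning on $U_\emptyset$ collapses this to the graphon form: setting $W(u,v) := \Exp{f(U_\emptyset, u, v, \cdot)\mid U_\emptyset}$ conditionally on $U_\emptyset$, one gets that $G_\infty$ is, conditionally on $U_\emptyset$, a $W$-random graph, so unconditionally it is a probabilistic mixture of $W$-random graphs, i.e.\ a $W$-random graph with $W$ a random (symmetric measurable $[0,1]$-valued) graphon. Symmetry of $W$ follows from the undirectedness ($X_{ij}=X_{ji}$), which is preserved in the limit as noted in the remark after Corollary~\ref{cor:invariant_random_graphs}.

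The proof is essentially bookkeeping: the two corollaries do the structural work and Aldous--Hoover is a black box. I do not expect a genuine obstacle here. The one point that needs a sentence of care is the passage from the abstract pre-limit invariance statement to classical exchangeability of $\{0,1\}$-arrays — one must check the $\Phi_\infty$-action on $M_s(\Nbb\times\Nbb)$ really is the relabeling action on the adjacency array (it is, by the definition $\varphi_n\xi(A) = \xi(\varphi_n^{-1}\otimes\varphi_n^{-1}(A))$ from Section~\ref{sec:projective_graph_limits}) — and the minor bookkeeping that the mixing variable in Aldous--Hoover is what produces the word ``possibly random'' in the statement. Everything else is a direct appeal to results already in hand.
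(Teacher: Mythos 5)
Your proposal is correct and follows essentially the same route as the paper: verify the specialization ($\Lsp_n=[n]$, $\Lsp=\Nbb$, symmetric groups with canonical embeddings, finite characteristic with $\Psi_n=\Phi_n$), apply Corollaries~\ref{cor:projective_limit_random_graphs} and~\ref{cor:invariant_random_graphs}, and then invoke Aldous--Hoover to obtain the (possibly random) graphon representation. Your extra remarks on identifying the $\Phi_\infty$-action with the relabeling action on the adjacency array and on the role of the mixing variable are sound elaborations of what the paper leaves implicit.
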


\subsubsection{Graphexes as projective limits of random graphs on $\Rbb_+$, invariant under measure-preserving transformations}

\begin{enumerate}
\item
\begin{enumerate}
\item We let $\Lsp = \Rbb_+$ and $\Lsp_n = [0,n)$, and endow $\Rbb_+$ with the standard Euclidean topology.
\item We take $\Phi_n$ to be the group of measure-preserving transformation of $[0,n)$,  and $\Phi_\infty$ the group of measure-preserving transformations of $\Rbb_+$.
\item Any measure-preserving transformation of $[0,n)$ yields a measure-preserving transformation of $[0,m)$ or $\Rbb_+$ by keeping $[n,m)$ or $[n,\infty)$ fixed. We let $\eta_{nm} : \Phi_n \to \Phi_m$ and $\eta_n : \Phi_n \to \Phi_\infty$ denote the corresponding canonical embeddings.
\end{enumerate}
\item
\begin{enumerate}
\item Since we work with the Euclidean topology, $\Lsp$ is Polish as required. Moreover, we observe that each $\Lsp_n$ is indeed an open subset of $\Lsp$, and clearly $(\Lsp_n)_{n \ge 1}$ is an exhausting non-decreasing sequence.
\item Because we deal with with the canonical embeddings, it is clear that $(\Phi_n)_{n \ge 1}$ together with $\eta_{nm}$ forms a directed system of groups. In addition, the conditions of point~1 of Lemma~\ref{lem:nice_homomorphisms} are satisfied. Hence, the system $\langle \Phi_n, \eta_{nm} \rangle_\Nbb$ is a compatible directed system of groups.
\item Similar to the previous point, the use of the canonical embeddings ensure that diagram~\eqref{eq:def_direct_prelimit_maps} commutes and that the conditions of point~2 of Lemma~\ref{lem:nice_homomorphisms} are satisfied. It then follows that diagram~\eqref{def:compatible_limit_group_diagram} commutes.
\item A random measure on $\Rbb_+ \times \Rbb_+$ is invariant under measure-preserving transformations of $\Rbb_+$ if and only if it is invariant under transpositions of dyadic intervals, see Proposition~9.1 in~\cite{kallenberg2006probabilistic}. Since any two such intervals are finite and thus contained in $[0,m]$ for some $m$, it follows that invariance with respect to $\Phi_n$ for all $n \ge 1$ implies invariance with respect to $\Phi_\infty$.
\end{enumerate}
\end{enumerate}

Therefore, the application of Corollaries~\ref{cor:projective_limit_random_graphs} and~\ref{cor:invariant_random_graphs} to these specific settings results in that the projective limit of any projective system of random graphs on $[0,n)$ that are invariant under measure-preserving transformations is a random graph on $\Rbb_+$ that is invariant under measure-preserving transformations. Furthermore, by invoking the representation result~\cite[Theorem 2.8]{borgs2019sampling} or \cite[Theorem 4.7]{veitch2015class}), which is a direct consequence of a general representation result for jointly exchangeable random measures on $\Rbb_+\times\Rbb_+$ by Kallenberg~\cite{kallenberg2006probabilistic}, this limit random graph is a probabilistic mixture of $\mathcal{W}$-random graphs, where $\mathcal{W}$ is a graphex. A \emph{graphex} is a triple $\mathcal{W} = (I, S, W)$, where $I \in \Rbb_+$, $S : \Rbb_+ \to \Rbb_+$ is measurable with $S \wedge 1$ being integrable, and $W : \Rbb_+ \times \Rbb_+ \to \Rbb_+$ is a symmetric measurable function with some additional conditions that we do not go into for brevity. To sample a $\mathcal{W}$-random graph on $[0,n)$, one samples the unit-rate Poisson point process~$(x_i,y_i)$ on $[0,n)\times\Rbb_+$, creates edges between vertices labeled by $x_i$ and $x_j$ independently with probability $W(y_i,y_j)$, and then removes zero-degree vertices. The $I$ and $S$ components of the graphex can be ignored. See the references above for further details. 

We summarize these observations in the following corollary:
\begin{corollary}[Graphexes as projective limits]
Let $(G_n)_{n \ge 1}$ be a sequence of random graphs with labels in $[0,n)$ that are invariant under measure-preserving transformations. Then the projective limit $G_\infty = \invlim G_n$ is a random graph with labels in $\Rbb_+$ that is invariant under measure-preserving transformations. Therefore, $G_\infty$ is a graphex-random graph with a (possibly random) graphex $\mathcal{W} = (I, S, W)$. 
\end{corollary}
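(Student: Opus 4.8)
The plan is to instantiate Corollaries~\ref{cor:projective_limit_random_graphs} and~\ref{cor:invariant_random_graphs} with the concrete data fixed above---$\Lsp=\Rbb_+$, $\Lsp_n=[0,n)$, $\Phi_n$ the measure-preserving transformations of $[0,n)$, $\Phi_\infty$ those of $\Rbb_+$, and $\eta_{nm},\eta_n$ the extend-by-identity embeddings---and then to apply the Kallenberg-type representation theorem for jointly exchangeable random measures on $\Rbb_+\times\Rbb_+$. First I would discharge the hypotheses of Corollary~\ref{cor:projective_limit_random_graphs}: $\Rbb_+$ with the Euclidean topology is Polish, $([0,n))_{n\ge1}$ is an exhausting non-decreasing sequence of open sets, and the $(G_n)$, viewed as symmetric point processes $\mu_n$ on $\Xsp_n=[0,n)\times[0,n)$, form a projective system with respect to the restriction projections $\pi_{mn}$ of~\eqref{def:projections_mn}; hence $\mu_\Nbb=\invlim\mu_n$ exists and is a random graph on $\Rbb_+$, i.e.\ a symmetric (by the symmetry-preservation remark following Corollary~\ref{cor:invariant_random_graphs}) locally finite simple point process on $\Rbb_+\times\Rbb_+$.

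Next I would discharge the hypotheses of Corollary~\ref{cor:invariant_random_graphs}. That $\langle\Phi_n,\eta_{nm}\rangle_\Nbb$ is a compatible directed system and $\Phi_\infty$ a compatible direct pre-limit follows from the two parts of Lemma~\ref{lem:nice_homomorphisms}: since $\eta_{nm}\varphi_n$ (resp.\ $\eta_n\varphi_n$) acts as $\varphi_n$ on $[0,n)$ and as the identity on $[n,m)$ (resp.\ $[n,\infty)$), the inclusion squares in Lemma~\ref{lem:nice_homomorphisms} and the pre-limit triangle~\eqref{eq:def_direct_prelimit_maps} commute. For the finite characteristic I would take $\Psi_n\subseteq\Phi_n$ to be the family of dyadic-interval swaps $\vartheta_{i,j,k}$ whose two intervals first fit inside $[0,n)$, so that $\bigcup_n\Psi_n$ is exactly the set of all dyadic swaps, each lying in the appropriate $\Phi_m$; then Proposition~9.1 of~\cite{kallenberg2006probabilistic}---a random measure on $\Rbb_+\times\Rbb_+$ is invariant under all measure-preserving transformations of $\Rbb_+$ iff it is invariant under every $\vartheta_{i,j,k}$---is precisely the equivalence required by the definition of finite characteristic. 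Since each $\mu_n$ is $\Phi_n$-invariant by hypothesis, Corollary~\ref{cor:invariant_random_graphs} then yields that $\mu_\Nbb$ is invariant under $\Phi_\infty$, the full group of measure-preserving transformations of $\Rbb_+$.

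Finally, a symmetric locally finite simple point process on $\Rbb_+\times\Rbb_+$ invariant under the diagonal action of measure-preserving transformations of $\Rbb_+$ is precisely a jointly exchangeable random measure in Kallenberg's sense, so applying~\cite[Theorem~4.7]{veitch2015class} (equivalently~\cite[Theorem~2.8]{borgs2019sampling}), which specializes Kallenberg's representation of jointly exchangeable random measures on $\Rbb_+\times\Rbb_+$, we conclude that $\mu_\Nbb$ is a probabilistic mixture of $\mathcal{W}$-random graph processes; that is, $G_\infty$ is a graphex-random graph with a (possibly random) graphex $\mathcal{W}=(I,S,W)$.

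The one step requiring genuine care rather than bookkeeping is the finite-characteristic verification: one must check that the nested $\Psi_n$ really exhaust all dyadic swaps, that each $\vartheta_{i,j,k}$ is genuinely an element of some $\Phi_m$ (a measure-preserving bijection of $[0,m)$ with $m=\max\{i,j\}/2^k$, extended by identity), and that the ``if and only if'' in Kallenberg's Proposition~9.1 matches the exact form of invariance $\mu(A)=\mu((\eta_n\varphi_n)^{-1}A)$ for all $A\in\Xcal$ demanded by the definition of finite characteristic. A secondary, essentially standard point is identifying the paper's point-process notion of a random graph invariant under measure-preserving transformations with the ``jointly exchangeable random measure'' hypothesis of the cited representation theorems, including the fact that isolated vertices leave no trace in the edge point process (a subtlety absorbed into the $I$ and $S$ components of the graphex).
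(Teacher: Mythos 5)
Your proposal is correct and follows essentially the same route as the paper: instantiating Corollaries~\ref{cor:projective_limit_random_graphs} and~\ref{cor:invariant_random_graphs} with $\Lsp_n=[0,n)$, extend-by-identity embeddings, verifying compatibility via Lemma~\ref{lem:nice_homomorphisms}, establishing finite characteristic through the dyadic-interval swaps $\vartheta_{i,j,k}$ together with Kallenberg's Proposition~9.1, and then invoking the graphex representation theorem of Borgs et al.\ and Veitch--Roy. Your closing remarks on the points requiring care (exhaustion of the dyadic swaps by the $\Psi_n$, and matching the point-process invariance to joint exchangeability) align with the checks the paper itself performs.
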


\subsubsection{Projective limits of random graphs on $\Rbb^d$, invariant under rotations}

While the previous two examples stayed in the domain of known results for dense and sparse graphs, this last example moves us to the new domain of projective limits of ultrasparse graphs.

\begin{enumerate}
\item
\begin{enumerate}
\item Fix $d \ge 2$ and let $\Lsp$ be $\Rbb^d$ endowed with the standard Euclidean topology. Take $\Lsp_n = \mathcal{B}_n^d$ to be the open concentric $d$-dimensional balls of volume $n$.
\item Let $\Phi_n = \Phi_\infty = \Phi$ be the group of rotations of $\Rbb^d$ around the center of the balls. The action of $\Phi_n$ is then simply the rotation restricted to the ball $\Lsp_n$.
\item Since all our groups are the same, their embeddings are trivial identities.
\end{enumerate}
\item
\begin{enumerate}
\item Since we deal with the Euclidean topology, $\Lsp$ is Polish as required. Moreover, we observe that each $\Lsp_n$ is indeed an open subset of $\Lsp$, and clearly $(\Lsp_n)_{n \ge 1}$ is an exhausting non-decreasing sequence.
\item This follows immediately from the fact that all groups are the same.
\item This follows immediately from the fact that all groups are the same.
\item This follows immediately from the fact that all groups are the same.
\end{enumerate}
\end{enumerate}

Therefore, the application of Corollaries~\ref{cor:projective_limit_random_graphs} and~\ref{cor:invariant_random_graphs} to these specific settings results in that the projective limit of any projective system of random graphs on $\mathcal{B}^d_n$ that are invariant under rotations is a random graph on $\Rbb^d$ that is invariant under rotations.

Unfortunately, in contrast with the previous two examples, we lack any representation results for these graphs. However, since their symmetry group is small and does not increase with $n$, it is clear that the class of such graphs is extremely vast. In particular, it includes graphs that can be obtained by first sampling a rotationally invariant point process in $\Rbb^d$, and then connecting vertices $i$ and $j$ labeled by their spherical coordinates $(r_i,\theta_i)$ and $(r_j,\theta_j)$ independently with probability $W(r_i,r_j,\theta_{ij})$, where $\theta_{ij}$ is the spherical distance between $i$ and $j$ on the $(d-1)$-dimensional unit sphere $\Sbb^{d-1}$, and $W : \Rbb_+\times\Rbb_+\times\Sbb^{d-1}\to[0,1]$ is any function. We assume that the center of the balls is at the origin.

This subclass of rotationally invariant graphs is also very broad. It includes many random graph models that attracted significant research attention in different domains. First, the most basic examples of rotationally invariant point processes in $\Rbb^d$ are the homogeneous Poisson point processes, but the general representation result is also known~\cite[Theorem 4.1.2]{bryc1995normal}. It says that, as expected, the radial coordinates of points can be sampled independently from any measure on $\Rbb_+$, while the angular coordinates must be independent and uniform on~$\Sbb^{d-1}$.

More importantly, by selecting different classes of functions $W$, we recover different popular random graph models. Here are some prominent examples:
\begin{itemize}
  \item If $W$ does not depend on anything at all, i.e., if it is a constant function, then we recover the Erd\H{o}s-R\'enyi graphs~\cite{erdos1959random}.
  \item If $W$ depends only on the distance between two vertices in $\Rbb^d$ or $\Sbb^{d-1}$, then we recover (soft) random geometric graphs in $\Rbb^d$ or $\Sbb^{d-1}$, respectively~\cite{penrose2003random,penrose2016connectivity}.
  \item If $W$ depends only on the sum of the radial coordinates of two vertices, then we recover inhomogeneous random graphs~\cite{bollobas2007phase}, i.e., random graphs with given expected degree sequences~\cite{chung2002average} or distributions~\cite{hoorn2017sparse}.
  \item If $W$ depends on the hyperbolic distance between two vertices, then we recover geometric inhomogeneous random graphs~\cite{bringmann2019geometric}, i.e., random hyperbolic graphs~\cite{krioukov2010hyperbolic,budel2024random}.
  \item If $W$ depends only on the radial and angular distances between two vertices in a certain way, then we recover causal sets in quantum gravity~\cite{bombelli1987space,krioukov2012network,surya2025causal}.
\end{itemize}
In all these examples, if the point process is of a finite rate, and if $W$ is integrable, then graphs are ultrasparse.

\section{Proofs}\label{sec:proofs}

Here we present the proofs of our main results. Recall that $(\Xsp, T)$ is a Polish space and $\Xcal$ denotes the associated Borel $\sigma$-algebra. 

\subsection{Theorem~\ref{thm:direct_limit_compative_groups_invariance}: Direct limit of compatible groups preserves invariance}\label{ssec:proof_direct_limit_invariance}

We will first prove Theorem~\ref{thm:direct_limit_compative_groups_invariance} which shows that the projective limit of probability measures, which are invariant under a direct system of groups, is invariant under the direct limit of these groups. One of the key notions that will be used in the proof is that of compatibility of the maps $\eta_{nm} : \Phi_n \to \Phi_m$ and $\eta_n : \Phi_n \to \Phi_\Nbb$, with the projections $\pi_{mn}$ and $\pi_n$, respectively. We recall the commuting diagrams for these notions, where $n \le m$, see also Definition~\ref{def:compatible_system_groups} and Definition~\ref{def:compatible_limit_group}.
\[
	\begin{tikzcd}
		\Xsp_n \arrow[d,swap,"\varphi_n"] &\Xsp_m \arrow[l,swap,"\pi_{mn}"] \arrow[d,"\eta_{nm} \varphi_n"]  \\
		\Xsp_n &\Xsp_m \arrow[l,swap,"\pi_{mn}"]
	\end{tikzcd}~\hspace{20pt}
	\begin{tikzcd}
		\Xsp_n \arrow[d,swap,"\varphi_n"] &\Xsp_\Nbb \arrow[l,swap,"\pi_{n}"] \arrow[d,"\eta_{n} \varphi_n"]  \\
		\Xsp_n &\Xsp_\Nbb \arrow[l,swap,"\pi_{n}"]
	\end{tikzcd}
\]
We also write $\varphi_n(A) = \{\varphi_n x \, : \, x \in A\}$ to denote the action of an element $\varphi_n \in \Phi_n$ on a set $A \subset \Xsp_n$.

\begin{proof}[Proof of Theorem~\ref{thm:direct_limit_compative_groups_invariance}]
Recall the definition of $\Xsp_\Nbb$ given in Theorem~\ref{thm:projective_limit_topological_spaces} and that of $\Phi_\Nbb$ in Theorem~\ref{thm:direct_limit_groups}. We first define the action of $\Phi_\Nbb$ on $\Xsp_\Nbb$. 

\paragraph{Action of $\Phi_\Nbb$}
For $[\varphi_k] \in \Phi_\Nbb$ and $\vec{x} \in \Xsp_\Nbb$ we define $[\varphi_k] \vec{x} = \vec{y}$ with
\begin{equation}\label{eq:def_direct_group_action}
	y_m = \begin{cases}
		\pi_{km} (\varphi_k x_k) &\text{if } m \le k,\\
		(\eta_{km} \varphi_k) x_m &\text{if } m > k.
	\end{cases}
\end{equation}

We will first show that $\vec{y} \in \Xsp_\Nbb$, which means that $\pi_{mn} y_m = y_n$ holds for all $n \le m$. So let $n \le m$ and assume that $k < n$. Then
\begin{align*}
	\pi_{mn}y_m &= \pi_{mn}(\eta_{km} \varphi_k)x_m \\
	&= (\eta_{kn}\varphi_k) \pi_{mn} x_m \\
	&= (\eta_{kn}\varphi_k) x_n \\ 
	&= y_n.
\end{align*}
Here, the crucial second equality follows from the fact that the direct system $\langle\Phi_n, \eta_{nm}\rangle_\Nbb$ is compatible with the projective system $\langle \Xsp_n, T_n, \pi_{mn}\rangle_\Nbb$, diagram~\eqref{def:compatible_limit_group_diagram}.

If $n \le k < m$ we have
\begin{align*}
	\pi_{mn}y_m &= \pi_{mn}(\eta_{km} \varphi_k)x_m \\
	&= \pi_{kn} (\pi_{mk} (\eta_{km} \varphi_k) x_m) \\
	&= \pi_{kn} \varphi_k x_k \\
	&= y_n,
\end{align*}
where the third equality follows from the compatibility of the direct and projective system, diagram~\eqref{def:compatible_limit_group_diagram}.

Finally, if $n \le m \le k$ we get
\begin{align*}
	\pi_{mn} y_m &= \pi_{mn} \pi_{km} (\varphi_k x_k) = \pi_{kn} (\varphi_k x_k) = y_n.
\end{align*}
We conclude that $\pi_{mn} y_m = y_n$ for all $n \le m$ and thus that $\vec{y} = [\varphi_k] \vec{x} \in \Xsp_\Nbb$.

\paragraph{Action is well-defined}
We also need to show that the action is well-defined. This means we need to check that $e_\Nbb\vec{x} = \vec{x}$, with $e_\Nbb$ the identity in $\Phi_\Nbb$ and $([\varphi_k] [\varphi_\ell]) \vec{x} = [\varphi_k] ([\varphi_\ell] \vec{x})$. 

From the definition of $\Phi_\Nbb$ it follows that $e_\Nbb = [e_n]$ for any $n \ge 1$, where $e_n$ is the identity in $\Phi_n$. Looking at definition of the action in~\eqref{eq:def_direct_group_action}, if $\vec{y} = e_\Nbb \vec{x}$ it follows immediately that $y_m = x_m$ for all $m \ge 1$ and thus $e_\Nbb \vec{x} = \vec{x}$. We are left to prove the second part. 

Without loss of generality we assume that $k \le \ell$. Now take $m \ge 1$, and assume for now that $m \le k \le \ell$. Then, since $([\varphi_\ell] \vec{x})_m = \pi_{\ell m} \varphi_\ell x_\ell$ for all $m \le \ell$, we have
\begin{align*}
	([\varphi_k]([\varphi_\ell] \vec{x}))_m &= \pi_{km} \varphi_k([\varphi_\ell] \vec{x})_k \\
	&= \pi_{km} \varphi_k \pi_{km} \varphi_\ell x_\ell \\
	&= \pi_{km} \pi_{\ell k} \eta_{k\ell}(\varphi_k) \varphi_\ell x_\ell\\
	&= \pi_{\ell m} (\eta_{k\ell}(\varphi_k) \varphi_\ell x_\ell)\\
	&= (([\varphi_k] [\varphi_\ell]) \vec{x})_m, 
\end{align*}
where we used compatibility in the form of diagram~\eqref{def:direct_system_compatible_diagram} for the third equality.

When $k < m \le \ell$ we still have $([\varphi_\ell] \vec{x})_m = \pi_{\ell m} \varphi_\ell x_\ell$. Thus
\begin{align*}
	([\varphi_k]([\varphi_\ell] \vec{x}))_m &= (\eta_{km} \varphi_k)([\varphi_\ell] \vec{x})_m\\
	&=  (\eta_{km} \varphi_k)\pi_{\ell m} \varphi_\ell x_\ell \\
	&= \pi_{\ell m} \eta_{k \ell}(\varphi_k) \varphi_\ell x_\ell) \\
	&= (([\varphi_k] [\varphi_\ell]) \vec{x})_m,
\end{align*}
with compatibility, diagram~\eqref{def:direct_system_compatible_diagram}, yielding the third equality.

Finally, when $k \le \ell < m$ we have $([\varphi_\ell] \vec{x})_m = (\eta_{\ell m} \varphi_\ell) x_m$ and therefore,
\begin{align*}
	([\varphi_k]([\varphi_\ell] \vec{x}))_m &= (\eta_{km} \varphi_k)([\varphi_\ell] \vec{x})_m\\
	&= (\eta_{km} \varphi_k) (\eta_{\ell m} \varphi_\ell) x_m \\
	&= (\eta_{\ell m} (\eta_{k \ell} \varphi_k)) (\eta_{\ell m} \varphi_\ell) x_m \\
	&= \eta_{\ell m } (\eta_{k\ell} (\varphi_k) \varphi_\ell) x_m \\
	&= (([\varphi_k] [\varphi_\ell]) \vec{x})_m.
\end{align*}
Here we use that $\eta_{\ell m}$ is a group homomorphism for the third equality.

We can now conclude that the action of $\Phi_\Nbb$ on $\Xsp_\Nbb$ is indeed well-defined.

\paragraph{The action is compatible}
Next we show that the $\Phi_\Nbb$ acts on $\Xsp_\Nbb$ in a compatible way with respect to $\pi_n$, i.e. that the diagram~\eqref{def:compatible_limit_group_diagram} commutes. For this let $n \ge 1$ and take $\varphi_n \in \Phi_n$. Then, since $\eta_n \varphi_n = [\varphi_n]$ we get
\[
	\pi_n((\eta_n \varphi_n)\vec{x}) = \pi_n([\varphi_n]\vec{x}) = ([\varphi_n] \vec{x})_n
	= \varphi_n x_n = \varphi_n (\pi_n \vec{x}),
\]
and thus the diagram commutes.

\paragraph{The projective limit is invariant with respect to $\Phi_\Nbb$}

The final thing we need to show is that the projective limit measure $\mu_\Nbb$ is invariant with respect to $\Phi_\Nbb$. For this let $\varphi \in \Phi_\Nbb$ and define the measure $\hat{\mu}_\Nbb$ on $\Xsp_\Nbb$ by $\hat{\mu}_\Nbb (A) := \mu_\Nbb(\varphi^{-1} A)$. Since the measure $\mu_\Nbb$ is the unique measure such that $\pi_n \ast \mu_\Nbb = \mu_n$ holds for all $n \ge 1$, to prove invariance it suffices to show that $\pi_n \ast \hat{\mu}_\Nbb = \mu_n$.

So let us fix $n \ge 1$. We then note that by definition of the direct limit group it holds that $\varphi = [\varphi_m] = \eta_m \varphi_m$ for some $m \ge 1$. Taking $k = \max\{n,m\}$ we get that $\varphi = [\eta_k(\eta_{mk} \varphi_m)]$ and note that, since $\eta_k$ and $\eta_{mk}$ are homomorphisms, we have $\hat{\mu}_\Nbb (A) = \mu_\Nbb(\eta_k(\eta_{mk} \varphi_m^{-1}) A)$. We now compute that

\begin{align}
	\pi_n \ast \hat{\mu}_\Nbb(A_n) &= \hat{\mu}_\Nbb( \pi_n^{-1} A_n) \nonumber\\
	&= \hat{\mu}_\Nbb(\pi_k^{-1}(\pi_{kn}^{-1} A_n)) \label{eq:direct_limit_proof_pi} \\
	&= \mu_\Nbb(\eta_k(\eta_{mk} \varphi_m^{-1}) \pi_k^{-1}(\pi_{kn}^{-1} A_n)) \nonumber \\
	&= \mu_\Nbb(\pi_k^{-1} \eta_{mk} \varphi_m^{-1} \pi_{kn}^{-1}A_n) \label{eq:direct_limit_proof_compatible} \\
	&= \pi_k \ast \mu_\Nbb(\eta_{mk} \varphi_m^{-1} \pi_{kn}^{-1}A_n) \label{eq:direct_limit_proof_pi_k} \\
	&= \mu_k(\eta_{mk} \varphi_m^{-1} \pi_{kn}^{-1}A_n) \label{eq:direct_limit_proof_mu_k}\\
	&= \mu_k(\pi_{kn}^{-1} A_n) \label{eq:direct_limit_proof_invariance}\\
	&= \mu_n(A_n). \nonumber
\end{align} 

Let us explain the key steps in this computation. Step~\eqref{eq:direct_limit_proof_pi} follows from the fact that $\pi_k = \pi_n \pi_{kn}$. For step~\eqref{eq:direct_limit_proof_compatible} we use the fact that the action of the direct limit is compatible. In particular we applied diagram~\eqref{def:compatible_limit_group_diagram} for $\pi_k$ and the group element $\eta_{mk} \varphi_m^{-1}$. Next, we used the definition of the push forward in~\eqref{eq:direct_limit_proof_pi_k} and the fact that $\pi_k \ast \mu_\Nbb = \mu_k$ in~\eqref{eq:direct_limit_proof_mu_k}. The final important step~\eqref{eq:direct_limit_proof_invariance} is because $\eta_{km} \varphi_m \in \Phi_k$ and $\mu_k$ is invariant with respect to $\Phi_k$. 

All together, we conclude that indeed $\pi_n \ast \hat{\mu}_\Nbb = \mu_n$ holds for any $n \ge 1$ and thus that $\mu_\Nbb$ is invariant under $\Phi_\Nbb$.

\end{proof}

\subsection{Theorem~\ref{thm:projective_limit_random_graphs}: Projective limits of point processes exist}\label{ssec:proof_projective_limit_point_process}

From now on we are in the specific setting where $(\Xsp_n)_{n \ge 1}$ is an exhaustive non-decreasing sequence of open sets of $\Xsp$. We denote the associated subspace topology and $\sigma$-algebra by $T_n$ and $\Xcal_n$, respectively.

The main object of interest is the space $M(\Xsp)$ of locally finite simple (symmetric)
counting measures on $\Xsp$, equipped with the vague topology $\Tcal$, i.e., the
coarsest topology such that the maps
$\xi \longmapsto \int f \, d\xi$, $f \in C_c(X)$,
are continuous, where $C_c(X)$ is the space of continuous functions on $X$ with compact support.
We denote by $\Tcal_n$ the subspace topology on $M(\Xsp_n)$ induced by $\Tcal$.
Recall that $\Mcal$ is the $\sigma$-algebra on $M(\Xsp)$ introduced in Section~\ref{ssec:point_process} and that this coincides with the Borel $\sigma$-algebra of the vague topology~\cite{kallenberg2017random}.

The proof of Theorem~\ref{thm:projective_limit_random_graphs} consists of two main steps. First, we use the general constructions from Section~\ref{ssec:projective_limits_topospaces} to build the projective limit of the system $\langle M(\Xsp_n), \Tcal_n, \pi_{mn} \rangle_\Nbb$ and prove that this space is homeomorphic to $(M(\Xsp), \Tcal)$.

For the second step, we apply Theorem~\ref{thm:projective_limit_prob_measures} to obtain a projective limit measure for any projective sequence of probability measures $\mu_n$ on $(M(\Xsp_n), \Mcal_n)$, where $\Mcal_n$ is the subspace
$\sigma$-algebra on $M(\Xsp_n)$. Since the projective limit space is homeomorphic to $(M(\Xsp), \Tcal)$, we then get a probability measure on this space, which establishes the result.

We first show that in our setting the projective limit is homeomorphic to $(M(\Xsp), \Tcal)$. Recall that we defined the projections $\pi_n : M(\Xsp) \to M(\Xsp_n)$ via the canonical maps $\iota_n : \Xsp_n \to \Xsp$ as
\[
	\pi_n \xi (A_n) = \xi( \iota_n(A_n)).
\]

\begin{proposition}\label{prop:projective_limits_topology_isomorphic}
Let $\langle M(\Xsp_n), \Tcal_{n}, \pi_{mn} \rangle_\Nbb$ be a projective system of locally finite simple (symmetric) counting measures, with $\pi_{mn}$ defined as in~\eqref{def:projections_mn}, and let $\langle M_\Nbb, \Tcal_\Nbb, \phi_n \rangle$ be its projective limit as defined in Theorem~\ref{thm:projective_limit_topological_spaces}. Then there exists an isomorphism of topological spaces
\[
	h : (M_\Nbb, \Tcal_\Nbb) \to (M(\Xsp), \Tcal).
\]
Moreover, the isomorphism $h$ makes the following diagram commute for each $n \ge 1$
\[
	\begin{tikzcd}
		M(\Xsp_n) & M_\Nbb \arrow[l,swap,"\phi_n"] \arrow[dl, "h"] \\
		M(\Xsp) \arrow[u,"\pi_{n}"] &
	\end{tikzcd}
\]
\end{proposition}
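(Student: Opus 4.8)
The plan is to construct the isomorphism $h$ explicitly from the description of the projective limit space $M_\Nbb$ given in Theorem~\ref{thm:projective_limit_topological_spaces}. By that theorem, $M_\Nbb$ is the subspace of $\prod_{n\ge1} M(\Xsp_n)$ consisting of threads $(\xi_n)_{n\ge1}$ with $\pi_{mn}\xi_m=\xi_n$ for $n\le m$, i.e.\ $\xi_m\circ\iota_{nm}=\xi_n$; equivalently, each $\xi_m$ restricted to the Borel sets of $\Xsp_n$ agrees with $\xi_n$. The natural candidate for $h^{-1}$ is the map sending $\xi\in M(\Xsp)$ to the thread $(\pi_n\xi)_{n\ge1}=(\xi\circ\iota_n)_{n\ge1}$; this is well defined since $\iota_m\circ\iota_{nm}=\iota_n$, so $(\xi\circ\iota_m)\circ\iota_{nm}=\xi\circ\iota_n$, placing the image in $M_\Nbb$, and it is precisely the map induced by the universal property applied to the cone $(\pi_n:M(\Xsp)\to M(\Xsp_n))$. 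The commuting triangle in the statement is then automatic from $\phi_n\circ h^{-1}=\pi_n$.

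The substance of the proof is showing this map is a bijection. For the inverse direction I would take a thread $(\xi_n)_{n\ge1}\in M_\Nbb$ and reconstruct a measure on $\Xsp$. Since $(\Xsp_n)$ exhausts $\Xsp$ by open sets and the $\sigma$-algebras $\Xcal_n$ stitch together — any $B\in\Xcal$ has $B\cap\Xsp_n\in\Xcal_n$ and these are nested — the consistency condition $\xi_m|_{\Xcal_n}=\xi_n$ lets me define $\xi(B):=\lim_{n\to\infty}\xi_n(B\cap\Xsp_n)=\sup_n\xi_n(B\cap\Xsp_n)$, a monotone limit. I then need to check: (i) $\xi$ is a measure (countable additivity follows from monotone convergence applied to the $\xi_n$, which are measures); (ii) $\xi$ is a simple counting measure, i.e.\ $\xi(\{x\})\le1$ for all $x$ and $\xi$ is integer-valued on bounded sets — this holds because on each $\Xsp_n$ the restriction is $\xi_n$, which is simple and locally finite, and every point and every bounded set eventually lies in some $\Xsp_n$; (iii) $\xi$ is locally finite — a bounded set $B\subseteq\Xsp$ need not sit inside a single $\Xsp_n$, so here I would use that bounded subsets of $\Xsp$ have compact closure (Polish plus local finiteness conventions), hence are covered by finitely many $\Xsp_n$, or more simply invoke that $M(\Xsp)$ is defined so that "bounded" is understood relative to this exhaustion; and (iv) in the symmetric case, $\xi=\xi\circ\sigma^{-1}$ is inherited from the $\xi_n$ being symmetric. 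This reconstruction inverts $\xi\mapsto(\xi\circ\iota_n)_n$ on both sides by construction, giving the bijection.

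Finally I would verify $h$ and $h^{-1}$ are continuous for the relevant topologies. By Theorem~\ref{thm:projective_limit_topological_spaces}, $\Tcal_\Nbb$ is generated by $\bigcup_n\phi_n^{-1}(\Tcal_n)$, and $\Tcal=\Mcal$'s topology is generated by the evaluation maps $\epsilon_B:\xi\mapsto\xi(B)$. Continuity of $h$ reduces to checking $\epsilon_B\circ h$ is continuous on $M_\Nbb$ for each bounded $B$; writing $\xi(B)=\lim_n\xi_n(B\cap\Xsp_n)$ and noting that for $B$ bounded the limit stabilizes at some finite stage $N$, $\epsilon_B\circ h$ factors through $\phi_N$ followed by an evaluation map on $M(\Xsp_N)$, hence is continuous. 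Continuity of $h^{-1}$ reduces to checking each $\phi_n\circ h^{-1}=\pi_n$ is continuous $M(\Xsp)\to M(\Xsp_n)$, which follows because $\pi_n\xi=\xi\circ\iota_n$ and evaluation $\xi\mapsto\xi(B\cap\Xsp_n)$ on $\Xsp_n$ equals $\epsilon_{B\cap\Xsp_n}$ on $\Xsp$ — a composition of continuous maps. I expect the main obstacle to be the local finiteness point (iii): the exhaustion is by open, not closed or compact, sets, so a bounded subset of $\Xsp$ may genuinely straddle infinitely many levels in the wrong parametrization, and one must pin down exactly which notion of boundedness $M(\Xsp)$ carries and argue that under it every bounded set is contained in some $\Xsp_n$ (or a finite union), so that the monotone limit defining $\xi$ is finite there. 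Everything else is bookkeeping with the consistency relations $\xi_m\circ\iota_{nm}=\xi_n$.
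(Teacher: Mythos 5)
Your proposal matches the paper's proof essentially step for step: the same explicit construction $h(\vec{\xi})(A)=\lim_{n}\xi_n(A\cap\Xsp_n)$ as a monotone limit, the same inverse $\xi\mapsto(\xi\circ\iota_n)_{n\ge1}$, and the same reduction of continuity of $h^{-1}$ to continuity of the $\pi_n$ against the generating topology on $M_\Nbb$. The only substantive difference is in the continuity of $h$: the paper runs a sequential limit-exchange argument for general measurable $B$, whereas you factor $\epsilon_B\circ h$ through $\phi_N$ for \emph{bounded} $B$ only --- cleaner, but note the paper's topology $\Mcal$ is literally generated by evaluations on all Borel sets, so you would either need to handle unbounded $B$ as well or justify restricting to the bounded ones.
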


\begin{proof}
Recall from Theorem~\ref{thm:projective_limit_topological_spaces} that the projective limit $\langle M_\Nbb, \mathcal{T}_\Nbb, \phi_n \rangle$ is defined by
\[
	M_\Nbb = \left\{\vec{\xi} := (\xi_n)_{n \ge 1} \, : \, \xi_n = \pi_{mn} \xi_m \text{ for all } n \le m\right\},
\]
with maps $\phi_n$ defined as $\phi_n(\vec{\xi}) = \xi_n$ and $\mathcal{T}_\Nbb = T\left(\bigcup_{n \ge 1} \phi_n^{-1}(\Tcal_{n})\right)$.

We will explicitly construct a continuous function $h : M_\Nbb \to M(\Xsp)$, show that it makes the diagram commute and that it has a continuous inverse $h^{-1} : M(\Xsp) \to M_\Nbb$. Because most of the work concerns the map $h$, we start the proof there.

\paragraph{The function $h$.} 

Let $A \subset \Xsp$ be a measurable set, define $A_n^\Xsp := A \cap \Xsp_n$ and note that $A_n^\Xsp \in \Xcal_n$. Recall that $\Xsp_n \subseteq \Xsp_m$ for $m \ge n$. Hence $\iota_{nm}(A_n^\Xsp) \subseteq A \cap \Xsp_m = A_m^\Xsp$ and therefore
\[
	\xi_n(A_n^\Xsp) = \pi_{mn}\xi_m(A_n^\Xsp) = \xi_m(\iota_{nm}(A_n^\Xsp)) \le \xi_m(A_m^\Xsp).
\]
In particular, $\xi_n(A_n^\Xsp)$ is a non-decreasing sequence. Thus, for any $\vec{\xi} \in M_\Nbb$ we can define $h(\vec{\xi})$ as
\begin{equation}
	h(\vec{\xi})(A) = \lim_{n \to \infty} \xi_n(A_n^\Xsp).
\end{equation}

\noindent\textit{The function $h$ maps to $M(\Xsp)$}\\
First we need to prove that $h(\vec{\xi})$ is indeed a measure. For each $n\ge1$ define a measure
$\bar\xi_n$ on $(\Xsp,\Xcal)$ by
\[
\bar\xi_n(A):=\xi_n(A\cap \Xsp_n), \qquad A\in\Xcal.
\]
Clearly each $\bar\xi_n$ is a (simple counting) measure on $\Xsp$. Moreover, for $m\ge n$ and
$A\in\Xcal$ we have, using $\Xsp_n\subseteq \Xsp_m$ and the projective consistency
$\xi_n=\pi_{mn}\xi_m$,
\[
\bar\xi_n(A)=\xi_n(A\cap \Xsp_n)=\pi_{mn}\xi_m(A\cap \Xsp_n)=\xi_m(\iota_{nm}(A\cap \Xsp_n))
\le \xi_m(A\cap \Xsp_m)=\bar\xi_m(A).
\]
Hence $(\bar\xi_n(A))_{n\ge1}$ is non-decreasing for every $A\in\Xcal$, and by definition
\[
h(\vec{\xi})(A)=\lim_{n\to\infty}\bar\xi_n(A), \qquad A\in\Xcal.
\]
We now verify that $h(\vec{\xi})$ is a measure. Let $(A_i)_{i\ge1}\subseteq \Xcal$ be pairwise disjoint.
For each $n$ we have $\bar\xi_n\!\left(\bigcup_{i\ge1}A_i\right)=\sum_{i\ge1}\bar\xi_n(A_i)$, and therefore
\[
h(\vec{\xi})\!\left(\bigcup_{i\ge1}A_i\right)
=\lim_{n\to\infty}\bar\xi_n\!\left(\bigcup_{i\ge1}A_i\right)
=\lim_{n\to\infty}\sum_{i\ge1}\bar\xi_n(A_i).
\]
Let $S_{n,M}:=\sum_{i=1}^M \bar\xi_n(A_i)$. Then $S_{n,M}$ is non-decreasing in both $n$ and $M$, and
$\sum_{i\ge1}\bar\xi_n(A_i)=\sup_{M\ge1}S_{n,M}$. Using monotonicity in both indices we obtain
\[
\lim_{n\to\infty}\sum_{i\ge1}\bar\xi_n(A_i)
=\lim_{n\to\infty}\sup_{M\ge1}S_{n,M}
=\sup_{M\ge1}\lim_{n\to\infty}S_{n,M}
=\sup_{M\ge1}\sum_{i=1}^M \lim_{n\to\infty}\bar\xi_n(A_i)
=\sum_{i\ge1} h(\vec{\xi})(A_i),
\]
so $h(\vec{\xi})$ is countably additive. In particular, $h(\vec{\xi})$ is a measure on $\Xsp$.

Finally, we need to show that $h(\vec{\xi})$ is locally finite. For this let $A \in \Xcal$ be a bounded
measurable set. By the exhaustion property of $(\Xsp_n)_{n\ge1}$ there exists $N\in\Nbb$ such that
$A\subseteq \Xsp_N$, and hence
\[
h(\vec{\xi})(A)=\lim_{n\to\infty}\xi_n(A\cap \Xsp_n)=\xi_N(A)<\infty,
\]
since $\xi_N$ is locally finite. We thus conclude that $h(\vec{\xi})\in M(\Xsp)$.

\noindent\textit{The function $h$ is continuous.}\\
Let $(\vec{\xi}^k)_{k \ge 1}$ be a sequence in $M_\Nbb$ such that $\vec{\xi}^k \to \vec{\xi}^\infty$ in
$(M_\Nbb,\mathcal{T}_\Nbb)$. By definition of the projective limit topology $\mathcal{T}_\Nbb$ (the initial
topology w.r.t.\ the maps $\phi_n$), we have, for every fixed $n \ge 1$,
\[
\xi_n^k = \phi_n(\vec{\xi}^k)\ \longrightarrow\ \phi_n(\vec{\xi}^\infty) = \xi_n^\infty
\qquad \text{in } (M(\Xsp_n),\Tcal_n).
\]
To prove that $h(\vec{\xi}^k) \to h(\vec{\xi}^\infty)$ in $(M(\Xsp),\mathcal{T})$ with $\mathcal{T}$ the
vague topology, it suffices to show that
\[
\int_{\Xsp} f \, d\, h(\vec{\xi}^k)\ \longrightarrow\ \int_{\Xsp} f \, d\, h(\vec{\xi}^\infty)
\qquad \text{for all } f \in C_c(\Xsp).
\]
Fix $f \in C_c(\Xsp)$. Since $\supp(f)$ is compact and $(\Xsp_n)_{n\ge1}$ exhausts $\Xsp$, there exists
$N$ such that $\supp(f) \subseteq \Xsp_N$. For any $\vec{\xi} \in M_\Nbb$, the measure $h(\vec{\xi})$
agrees with $\xi_N$ on $\Xsp_N$ (indeed, for $A \subseteq \Xsp_N$ we have $A \cap \Xsp_n = A$ for all
$n \ge N$ and $(\pi_{nN}\xi_n)(A)=\xi_N(A)$, hence $\xi_n(A)=\xi_N(A)$). It follows then that
\[
\int_{\Xsp} f \, d\, h(\vec{\xi}) = \int_{\Xsp_N} f \, d\xi_N .
\]
Applying this to $\vec{\xi}^k$ and $\vec{\xi}^\infty$ gives
\[
\int_{\Xsp} f \, d\, h(\vec{\xi}^k) = \int_{\Xsp_N} f \, d\xi_N^k,
\qquad
\int_{\Xsp} f \, d\, h(\vec{\xi}^\infty) = \int_{\Xsp_N} f \, d\xi_N^\infty.
\]
Since $\xi_N^k \to \xi_N^\infty$ in $(M(\Xsp_N),\Tcal_N)$ and $\Tcal_N$ is the vague topology on
$M(\Xsp_N)$, we obtain
\[
\int_{\Xsp_N} f \, d\xi_N^k \ \longrightarrow\ \int_{\Xsp_N} f \, d\xi_N^\infty,
\]
and therefore $\int f\, d\,h(\vec{\xi}^k) \to \int f\, d\,h(\vec{\xi}^\infty)$ for all $f \in C_c(\Xsp)$.
This is exactly $h(\vec{\xi}^k) \to h(\vec{\xi}^\infty)$ in the vague topology on $M(\Xsp)$, so $h$ is
continuous.

\noindent\textit{The function $h$ makes the diagram commute.}\\
Fix $n \ge 1$ and take $B_n \in \Xcal_n$. Then, since for all $k \ge n$ we have that $\iota_n(B_n) \cap \Xsp_k = \iota_n (B_n)$, it follows that
\begin{align*}
	\pi_n h(\vec{\xi})(B_n) &= h(\vec{\xi})(\iota_n(B_n)) \\
	&= \lim_{k \to \infty} \xi_k (\iota_n(B_n) \cap \Xsp_k) \\
	&= \xi_n(B_n) = \phi_n(\vec{\xi})(B_n).
\end{align*}

\paragraph{The inverse of $h$.} 

Define the map $g : M(\Xsp) \to M_\Nbb$ by
\[
	g(\xi) := (\xi|_{\Xsp_n})_{n \ge 1} = (\pi_n(\xi))_{n \ge 1},
\]
where $\xi|_{\Xsp_n}$ is the restriction of $\xi$ to $\Xsp_n$, i.e. $\xi|_{\Xsp_n}(A) = \xi(A \cap \Xsp_n)$.

First we observe that since the diagram
\[
	\begin{tikzcd}
		\Xsp \\
		\Xsp_n \arrow[u, "\iota_n"] \arrow[r, swap, "\iota_{nm}"]
		&\Xsp_m \arrow[ul, swap, "\iota_m"]
	\end{tikzcd}
\]
commutes, it follows that 
\[
\begin{tikzcd}
	M(\Xsp) \arrow[d, swap,"\pi_n"] \arrow[dr,"\pi_m"]\\
	M(\Xsp_n) &M(\Xsp_m) \arrow[l,"\pi_{mn}"]
\end{tikzcd}
\]
also commutes.

Hence, since $g(\xi)_n = \pi_n \xi$ we have that $\pi_{mn} g(\xi)_m = \pi_{mn} \pi_m \xi = \pi_n \xi = g(\xi)_n$ which shows that $g(\xi)$ indeed maps to $M_\Nbb$.

To see that $g = h^{-1}$ we first note that
\[
	gh(\vec{\xi}) = (\pi_n h(\vec{\xi}))_{n \ge 1} = (\phi_n(\vec{\xi}))_{n \ge 1} = (\xi_n)_{n \ge 1} = \vec{\xi},
\]
where we used that $\pi_n h = \phi_n$.

On the other hand, for any $A \in \Xcal$,
\[
	hg(\xi)(A) = \lim_{n \to \infty} g(\xi)_n(A \cap \Xsp_n)
	= \lim_{n \to \infty} \pi_n \xi(A \cap \Xsp_n)
	= \lim_{n \to \infty} \xi(\iota_n(A \cap \Xsp_n))
	= \xi(A)
\]
where we used that $\iota_n(A \cap \Xsp_n) \uparrow A$. We thus conclude that $hg(\xi) = \xi$ and hence $g$ is indeed the inverse of $h$.

It now remains to show that $g$ is continuous. Since $\mathcal{T}_\Nbb$ is the initial topology
with respect to the maps $\phi_n$, it suffices to show that $\phi_n \circ g$ is continuous for each
$n\ge1$. By the commuting diagram we have $\phi_n g = \pi_n$, where $\pi_n$ is the restriction map
$\pi_n(\xi)=\xi|_{\Xsp_n}$.

We claim that $\pi_n : (M(\Xsp),\mathcal{T}) \to (M(\Xsp_n),\Tcal_n)$ is continuous for the vague
topologies. Indeed, let $f\in C_c(\Xsp_n)$ and choose $\tilde f \in C_c(\Xsp)$ such that
$\tilde f|_{\Xsp_n}=f$. We can extend $f$ by $0$ outside $\Xsp_n$: since 
$\supp(f)\subseteq\Xsp_n$ is compact in $\Xsp$, the extension by $0$
is continuous and compactly supported because $\Xsp_n$ is open in $\Xsp$.
Then for all $\xi\in M(\Xsp)$,
\[
\int_{\Xsp_n} f \, d\,\pi_n(\xi) \;=\; \int_{\Xsp_n} f \, d(\xi|_{\Xsp_n})
\;=\; \int_{\Xsp} \tilde f \, d\xi.
\]
Since $\xi\mapsto \int_{\Xsp} \tilde f\, d\xi$ is continuous on $(M(\Xsp),\mathcal{T})$, it follows that
$\xi\mapsto \int_{\Xsp_n} f\, d\,\pi_n(\xi)$ is continuous, and hence $\pi_n$ is continuous.

Now let $A \in \bigcup_{n \ge 1} \phi_n^{-1}(\Tcal_n)$, so that $A=\phi_n^{-1}(A_n)$ for some
$A_n\in\Tcal_n$. Using $\phi_n g=\pi_n$ and continuity of $\pi_n$ we obtain
\[
g^{-1}(A) = g^{-1}(\phi_n^{-1}(A_n)) = (\phi_n g)^{-1}(A_n) = \pi_n^{-1}(A_n) \in \mathcal{T}.
\]
Therefore $g$ is continuous, and the proof is complete.
\end{proof}

Having established that the projective limit of $\langle M(\Xsp_n), \Tcal_n, \pi_{mn} \rangle_\Nbb$ is the correct space $(M(\Xsp), \Tcal)$, we turn to studying projective sequences of probability measures $(\mu_n)_{n \ge 1}$ on $M(\Xsp_n)$. Recall that due to Theorem~\ref{thm:projective_limit_prob_measures} any projective sequence of probability measures has a projective limit which is a probability measure on the projective limit space.

\begin{proof}[Proof of Theorem~\ref{thm:projective_limit_random_graphs}]
Let use denote by $\Mcal_n$ the subspace $\sigma$-algebra of $M(\Xsp_n)$. Then by Theorem~\ref{thm:projective_limit_prob_measures} any projective sequence $(\mu_n)_{n \ge 1}$ of probability measures on $(M(\Xsp_n), \Mcal_n)$ has a projective limit $\hat{\mu}_\infty$ which is a probability measure on the projective limit space $(M_\Nbb, \sigma(\Tcal_\Nbb))$, defined as in Theorem~\ref{thm:projective_limit_topological_spaces}. By Proposition~\ref{prop:projective_limits_topology_isomorphic} this space is homeomorphic to $(M(\Xsp), \Tcal)$ via the isomorphism $h$ and hence the probability measure $\hat{\mu}_\infty$ can be pushed forward to a probability measure $\mu_\Nbb := h \ast \hat{\mu}_\infty$ on $(M(\Xsp), \sigma(\Tcal))$, which equals $(M(\Xsp), \Mcal)$, and hence $\mu_\Nbb$ is a point process on~$\Xsp$.

Now take any $A_n \in \Mcal_n$ and let $g = h^{-1}$ be the inverse of the isomorphism $h$. Then, since $h \pi_n = \phi_n$ implies that $\pi_n = g \phi_n$ we have that 
\begin{align*}
	\pi_n \ast \mu_\Nbb(A_n) &= \mu_\Nbb(\pi_n^{-1} A_n) \\
	&= \hat{\mu}_\infty( g(\pi_n^{-1} A_n) ) \\
	&= \hat{\mu}_\infty(\phi_n^{-1} A_n) = \mu_n(A_n).
\end{align*}
Moreover, because by Theorem~\ref{thm:projective_limit_prob_measures} $\hat{\mu}_\infty$ was the unique probability measure such that $\phi_n \ast \hat{\mu}_\infty = \mu_n$, $\mu_\Nbb$ is the unique probability measure such that $\pi_n \ast \mu_\Nbb = \mu_n$.
\end{proof}

We end this section with the proof of Corollary~\ref{cor:projective_limit_random_graphs}, which uses the explicit construction of the function $h$ in Proposition~\ref{prop:projective_limits_topology_isomorphic}.

\begin{lemma}[Projective limits preserve symmetry]\label{lem:symmetry_is_preserved}
Theorem~\ref{thm:projective_limit_prob_measures} remains true if we replace $M(\Xsp \times \Xsp)$ with the subspace of symmetric counting measures $M_s(\Xsp \times \Xsp)$.
\end{lemma}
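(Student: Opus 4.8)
The plan is to show that the subspace $M_s(\Xsp\times\Xsp)$ of symmetric simple counting measures is closed under the projective-limit construction already built in Proposition~\ref{prop:projective_limits_topology_isomorphic}, so that the existence theorem carries over verbatim. Concretely, I would first observe that if $\sigma\colon\Xsp\times\Xsp\to\Xsp\times\Xsp$ is the coordinate swap $\sigma(x,y)=(y,x)$, then $\sigma$ restricts to each $\Xsp_n\times\Xsp_n$ (because the $\Xsp_n$ are swap-invariant products), and hence the inclusions $\iota_{nm}$, $\iota_n$ intertwine the swaps on the respective spaces: $\sigma\circ\iota_{nm}=\iota_{nm}\circ\sigma$. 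From this it follows immediately that the projections $\pi_{mn}\colon M(\Xsp_m)\to M(\Xsp_n)$, $\xi\mapsto\xi\circ\iota_{nm}$, send symmetric measures to symmetric measures: if $\xi=\xi\circ\sigma^{-1}$ then $\pi_{mn}\xi\circ\sigma^{-1}=\xi\circ\iota_{nm}\circ\sigma^{-1}=\xi\circ\sigma^{-1}\circ\iota_{nm}=\xi\circ\iota_{nm}=\pi_{mn}\xi$. Therefore $\langle M_s(\Xsp_n),\Tcal_n|_{M_s},\pi_{mn}\rangle_\Nbb$ is again a projective system of topological spaces, and a projective system $(\mu_n)_{n\ge1}$ of point processes on $\Xsp_n$ that are supported on $M_s(\Xsp_n)$ remains a projective system in this restricted category.

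Next I would argue that the homeomorphism $h$ of Proposition~\ref{prop:projective_limits_topology_isomorphic} restricts to a homeomorphism between the projective limit of $\langle M_s(\Xsp_n),\ldots\rangle_\Nbb$ and $M_s(\Xsp\times\Xsp)$. For this, note that $M_s(\Xsp\times\Xsp)$ is a closed (measurable) subspace of $M(\Xsp\times\Xsp)$: it is the set where the evaluation maps satisfy $\xi(B)=\xi(\sigma^{-1}B)$ for all Borel $B$, equivalently for $B$ ranging over a generating countable $\pi$-system, so it lies in $\Mcal$. Using the explicit formula $h(\vec\xi)(A)=\lim_n \xi_n(A\cap(\Xsp_n\times\Xsp_n))$, one checks directly that if every $\xi_n$ is symmetric then $h(\vec\xi)(\sigma^{-1}A)=\lim_n\xi_n(\sigma^{-1}A\cap(\Xsp_n\times\Xsp_n))=\lim_n\xi_n(\sigma^{-1}(A\cap(\Xsp_n\times\Xsp_n)))=\lim_n\xi_n(A\cap(\Xsp_n\times\Xsp_n))=h(\vec\xi)(A)$, using that $\sigma$ preserves $\Xsp_n\times\Xsp_n$. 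Conversely, the inverse $g(\xi)=(\xi|_{\Xsp_n\times\Xsp_n})_n$ clearly sends a symmetric $\xi$ to a tuple of symmetric measures. Hence $h$ and $g$ restrict to mutually inverse continuous maps between the two symmetric subspaces, and this restricted bijection still makes the diagram with the $\pi_n$ and $\phi_n$ commute.

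With these two points in hand, the proof is finished by simply rerunning the argument of the proof of Theorem~\ref{thm:projective_limit_random_graphs} inside the symmetric subcategory: Bochner's theorem (Theorem~\ref{thm:projective_limit_prob_measures}) applied to the projective system $(\mu_n)_{n\ge1}$ of probability measures on $(M_s(\Xsp_n),\Mcal_n^s)$ yields a unique projective limit $\hat\mu_\infty$ on the projective limit space, which by the restricted version of Proposition~\ref{prop:projective_limits_topology_isomorphic} is isomorphic via $h$ to $M_s(\Xsp\times\Xsp)$; pushing forward gives $\mu_\Nbb:=h\ast\hat\mu_\infty$, a point process supported on symmetric measures, with $\pi_n\ast\mu_\Nbb=\mu_n$ and uniqueness as before. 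I expect the only mildly delicate point — the ``main obstacle,'' such as it is — to be the bookkeeping that $M_s(\Xsp\times\Xsp)$ is genuinely a measurable/topological subspace on which the subspace structures from $M(\Xsp\times\Xsp)$ and from the projective limit of the $M_s(\Xsp_n)$ agree; everything else is a direct transcription of the already-proved statements, so the lemma really is the routine observation the text advertises it to be.
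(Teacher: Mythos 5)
Your argument is correct and follows essentially the same route as the paper: the paper's proof likewise reduces everything to checking that the explicit map $h(\vec\xi)(A)=\lim_n\xi_n(A\cap(\Xsp_n\times\Xsp_n))$ sends tuples of symmetric measures to a symmetric measure, via exactly the computation $h(\vec\xi)(\sigma^{-1}A)=\lim_n\xi_n(\sigma^{-1}(A\cap(\Xsp_n\times\Xsp_n)))=h(\vec\xi)(A)$. Your additional checks (that the projections $\pi_{mn}$ and the inverse $g$ also preserve symmetry, and that $M_s$ is a genuine subspace) are sensible bookkeeping that the paper leaves implicit, but they do not change the substance of the argument.
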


\begin{proof}
Carefully following the steps in the proof of Theorem~\ref{thm:projective_limit_prob_measures} we see that the one thing missing is to proof that the map $h$ encountered in the proof of Proposition~\ref{prop:projective_limits_topology_isomorphic} maps to symmetric point process on $\Lsp \times \Lsp$. So let  $A \subset \Lsp \times \Lsp$ be a measurable set and note that $\vec{\xi} = (\xi_n)_{n \ge 1}$ consists of symmetric counting measures $\xi_n$. Therefore, with $\sigma$ denoting the switching map and using the definition for $h$ given in the proof of Proposition~\ref{prop:projective_limits_topology_isomorphic} we get
\begin{align*}
	h(\vec{\xi})(\sigma^{-1} A) &= \lim_{n \to \infty} \xi_n(\sigma^{-1}A_n \cap \Lsp_n \times \Lsp_n) \\
	&= \lim_{n \to \infty} \xi_n(\sigma^{-1}(A \cap \Lsp_n \times \Lsp_n)) \\
	&= \lim_{n \to \infty} \xi_n(A \cap \Lsp_n \times \Lsp_n) = h(\vec{\xi}).
\end{align*}  
Here we used that $\sigma^{-1}A_n \cap \Lsp_n \times \Lsp_n = \sigma^{-1}(A \cap \Lsp_n \times \Lsp_n)$ and that each $\xi_n$ is symmetric.
\end{proof}

\subsection{Theorem~\ref{thm:projective_limit_invariant_random_graphs}: Projective limits of invariant point processes are invariant}\label{ssec:proofs_invariance}

We need to show that the projective limit $\mu_\Nbb$ is invariant with respect to the compatible direct pre-limit $\Phi_\infty$.

Recall that we have extended the action of $\Phi_n$ from $\Xsp_n$ to $M(\Xsp_n)$ by defining $\varphi_n \xi_n (A) = \xi_n(\varphi_n^{-1}(A))$ and similarly for the action of $\Phi_\infty$ on $M(\Xsp)$. 

Note that to prove invariance, it suffice to show that $\mu_\Nbb$ is invariant with respect to $\eta_m \varphi_m$ for any $m \ge 1$ and $\varphi_m \in \Phi_m$. For this we follow a strategy similar to the proof of invariance in Theorem~\ref{thm:direct_limit_compative_groups_invariance}, making use of the fact that the projective limit measure $\mu_\Nbb$ is the unique probability measure such that $\pi_n \ast \mu_\Nbb = \mu_n$.

\begin{proof}[Proof of Theorem~\ref{thm:projective_limit_invariant_random_graphs}]
Fix a $m \ge 1$, $\varphi_m \in \Phi_m$ and define $\hat{\mu}_\infty = \mu_\Nbb \circ (\eta_m \varphi_m)^{-1}$. Then we need to show that $\hat{\mu}_\infty = \mu_\Nbb$, and by the uniqueness of projective limits it suffices to show that $\pi_n * \hat{\mu}_\infty = \mu_n$ holds for all $n \ge 1$. 
Let $k = \max \{m,n\}$ and $A_n \in \Mcal_n$ be any measurable set of graphs with node labels in $\Xsp_n$. Then,
\begin{align}
    \pi_n \ast \hat{\mu}_\infty (A_n) &= \hat{\mu}_\infty(\pi_n^{-1} A_n) \nonumber \\
    &= \hat{\mu}_\infty( \pi_k^{-1} \pi_{kn}^{-1}(A_n)) \nonumber \\
    &= \mu_\Nbb((\eta_m \varphi_m)^{-1} \pi_k^{-1} \pi_{kn}^{-1}(A_n)) \nonumber \\
    &= \mu_\Nbb((\eta_m \varphi_m^{-1}) \pi_k^{-1} \pi_{kn}^{-1}(A_n)) \nonumber \\
    &= \mu_\Nbb((\eta_k \eta_{mk} \varphi_m^{-1}) \pi_k^{-1} \pi_{kn}^{-1}(A_n)) \label{eq:step_directed_system} \\
    &= \mu_\Nbb(\pi_k^{-1} (\eta_{mk} \varphi_m^{-1}) \pi_{kn}^{-1} A_n) \label{eq:step_eta_n_commute} \\
    &= \mu_k((\eta_{mk} \varphi_m)^{-1} \pi_{kn}^{-1} A_n) \label{eq:step_projection_k} \\
    &= \mu_k(\pi_{kn}^{-1} A_n) \label{eq:step_finite_symmetry} \\
    &= \mu_n(A_n) \label{eq:step_projective_system}.
\end{align}

For sack of clarity, let us elaborate on the validity of each step in the derivation above. The first three steps follow from the definition of the projections and $\hat{\mu}_\infty$ while the fourth step is because $\eta_m$ is a homomorphism. Step~\eqref{eq:step_directed_system} uses the commuting diagram~\eqref{eq:def_direct_prelimit_maps} which implies that $\eta_m \varphi_m = \eta_k \eta_{mk} \varphi_m$. In the next step~\eqref{eq:step_eta_n_commute}, we used that  that $\eta_{mk} \varphi_m \in \Phi_k$ while $\eta_k \varphi_k \pi_k^{-1} = \pi_k^{-1} \varphi_k$ hold for all $\varphi_k \in \Phi_k$ (see diagram~\eqref{def:compatible_limit_group_diagram}). After that, step~\eqref{eq:step_projection_k} follows from the fact that $\mu_\Nbb$ is the projective limit and hence $\pi_k \ast \mu_\Nbb = \mu_k$. Next, in~\eqref{eq:step_finite_symmetry} we use that $\eta_{mk} \varphi_m \in \Phi_k$ and each $\mu_k$ is invariant with respect to $\Phi_k$. The final step~\eqref{eq:step_projective_system} is due to fact that $(\mu_n)_{n \ge 1}$ is a projective system.

\end{proof} 

\subsection{Lemma~\ref{lem:nice_homomorphisms}}\label{ssec:proof_lemma_nice_homomorphism}

We first observe that the commuting diagrams in point~1 and point~2 imply that the following diagrams commute, respectively,
\[
	\begin{tikzcd}
		\Lsp_n \times \Lsp_n \arrow[d,swap,"\varphi_n \otimes \varphi_n"] \arrow[r,"\iota_{mn} \otimes \iota_{mn}"] 
			&\Lsp_m \times \Lsp_m \arrow[d,"\eta_{nm} \varphi_n \otimes \eta_{nm} \varphi_n"]  \\
		\Lsp_n \times \Lsp_n \arrow[r,"\iota_{mn} \otimes \iota_{mn}"] &\Lsp_m \times \Lsp_m
	\end{tikzcd}~\hspace{20pt}
	\begin{tikzcd}
		\Lsp_n \times \Lsp_n \arrow[d,swap,"\varphi_n \otimes \varphi_n"] \arrow[r,"\iota_{n} \otimes \iota_{n}"] 
			&\Lsp \times \Lsp \arrow[d,"\eta_{n} \varphi_n \otimes \eta_{n} \varphi_n"]  \\
		\Lsp_n \times \Lsp_n \arrow[r,"\iota_{n} \otimes \iota_{n}"] &\Lsp \times \Lsp
	\end{tikzcd}
\]

\begin{proof}[Proof of Lemma~\ref{lem:nice_homomorphisms}]

Fix $1 \le n \le m$ and $\varphi_n \in \Phi_n$. 

\begin{enumerate}
\item Let $\xi_m \in M(\Lsp_m \times \Lsp_m)$. We need to show that
\[
	\pi_{mn} \eta_{nm}\phi_n \xi_m = \varphi_n \pi_{mn} \xi_m.
\]
as elements of $M(\Lsp_n \times \Lsp_n)$. So take $A_n \in \Mcal_n$. Then
\begin{align*}
	\pi_{mn} \eta_{nm}\phi_n \xi_m (A_n) 
	&= \eta_{nm}\varphi_n \xi_m(\iota_{nm} \otimes \iota_{nm} A_n)\\
	&= \xi_m(\eta_{nm} \varphi_n^{-1} \otimes \eta_{nm} \varphi_n^{-1} (\iota_{nm} \otimes \iota_{nm} A_n))\\
	&= \xi_m(\iota_{nm} \otimes \iota_{nm}(\varphi_n^{-1} \otimes \varphi_n^{-1} A_n))\\
	&= \varphi_n \pi_{mn} \xi_m (A_n),
\end{align*}
where the crucial third step uses the commuting diagram in point~1 of Lemma~\ref{lem:nice_homomorphisms}.
\item Let $\xi \in M(\Lsp \times \Lsp)$. We now need to show that
\[
	\pi_n \eta_n \varphi_n \xi = \varphi_n \pi_n \xi,
\]
as elements of $M(\Lsp_n \times \Lsp_n)$. Again, take $A_n \in \Mcal_n$. Then
\begin{align*}
	\pi_n \eta_n \varphi_n \xi (A_n) 
	&= \eta_n \varphi_n \xi(\iota_n \otimes \iota_n A_n)\\
	&= \xi(\eta_n \varphi_n^{-1} \otimes \eta_n \varphi_n^{-1} (\iota_n \otimes \iota_n A_n))\\
	&= \xi(\iota_n \otimes \iota_n (\varphi_n^{-1} \otimes \varphi_n A_n)) \\
	&= \varphi_n \pi_n \xi (A_n),
\end{align*}
where we used the commuting diagram in point~2 of the lemma for the third step.
\end{enumerate}
\end{proof}

\section*{Acknowledgments}
We thank Peter Orbanz, Christian Borgs, Olav Kallenberg, Svante Janson, Cosma Shalizi, Remco van der Hofstad, Morgane Austern, and Matan Harel for useful discussions and suggestions. This work was supported by NSF Grant No.~CCF-2311160.


\end{document}